\documentclass[aihp]{imsart}

\RequirePackage{amsthm,amsmath,amsfonts,amssymb}
\RequirePackage[numbers,sort&compress]{natbib}
\RequirePackage[colorlinks,citecolor=blue,urlcolor=blue]{hyperref}
\RequirePackage{graphicx}

\usepackage{amsmath, amssymb, amsthm, bbm}
\usepackage{units}
\usepackage{fancybox}
\usepackage{hyperref}
\usepackage{enumerate} 

\usepackage{dsfont}  

\usepackage{csquotes}

\startlocaldefs

\theoremstyle{plain}
\newtheorem{theorem}{Theorem}[section]
\newtheorem{prop}[theorem]{Proposition}
\newtheorem{setting}[theorem]{Setting}
\newtheorem{lemma}[theorem]{Lemma}

%
%
%
%
%

\theoremstyle{remark}
\newtheorem{definition}[theorem]{Definition}


\providecommand{\1}{{ \ensuremath{ \mathbbm{1}}}}
\providecommand{\E}{{\ensuremath{\mathbb{E}}}}

\newcommand{ \N}{ \mathbb{N}}
\newcommand{ \R}{ \mathbb{R}}

\newcommand{\mycdot}{\ensuremath\cdot}
\newcommand{\symbolsigmaalgebra}{\mathfrak{S}}
\endlocaldefs

\begin{document}

\begin{frontmatter}
\title{On the It\^o-Alekseev-Gr\"obner formula\\
for stochastic differential equations}
\runtitle{It\^o-Alekseev-Gr\"obner formula}

\begin{aug}
\author[A]{\inits{A.}\fnms{Anselm}~\snm{Hudde}\ead[label=e1]{hudde@rheinahrcampus.de}\orcid{0000-0002-5652-2815}},
\author[B]{\inits{M.}\fnms{Martin}~\snm{Hutzenthaler}\ead[label=e2]{martin.hutzenthaler@uni-due.de}\orcid{0000-0003-0738-8717}}
\author[C,D]{\inits{A.}\fnms{Arnulf}~\snm{Jentzen}\ead[label=e3]{ajentzen@cuhk.edu.cn}\ead[label=e4]{ajentzen@uni-muenster.de}\orcid{0000-0002-9840-3339}}
\and
\author[E]{\inits{S.}\fnms{Sara}~\snm{Mazzonetto}\ead[label=e5]{sara.mazzonetto@univ-lorraine.fr}\orcid{0000-0001-6187-2716}}

\address[A]{Faculty of Mathematics and Technology, University of Applied Sciences Koblenz, Germany\printead[presep={,\ }]{e1}}

\address[B]{Faculty of Mathematics, University of Duisburg-Essen, Essen, Germany\printead[presep={,\ }]{e2}}
\address[C]{School of Data Science and Shenzhen Research Institute of Big Data,
The Chinese\\ University of Hong Kong, Shenzen (CUHK-Shenzen), China\printead[presep={,\ }]{e3}}
\address[D]{Applied Mathematics: Institute for Analysis and Numerics, Faculty of Mathematics\\ and Computer Science, University of M\"unster, Germany\printead[presep={,\ }]{e4}}
\address[E]{Facult\'e des sciences et Technologies, Universit\'e de Lorraine, France\printead[presep={,\ }]{e5}}
\end{aug}

\begin{abstract}
In this article we establish a new formula for the difference of
a test function of
the solution of a stochastic differential equation and of the test function
of an It\^o process.
The introduced formula essentially generalizes both 
the classical Alekseev-Gr\"obner formula from  the literature on deterministic
differential equations as well as the classical It\^o formula from stochastic analysis.
The discovered formula,
which we suggest to refer to as 
It\^o-Alekseev-Gr\"obner formula,
is a powerful tool for deriving
strong approximation rates for
perturbations and approximations
of stochastic ordinary and partial differential equations.

\end{abstract}

\begin{abstract}[language=french]
Dans cet article nous pr\'esentons une nouvelle formule qui exprime la diff\'erence entre une fonction test appliqu\'ee \`a une solution d’une \'equation diff\'erentielle stochastique et la m\^eme fonction test appliqu\'ee \`a un processus d’It\^o. Cette formule g\'en\'eralise \`a la foi la formule classique de Alekseev-Gr\"obner pour les \'equations diff\'erentielles d\'eterministes et la formule d’It\^o de l’analyse stochastique. Ainsi, nous sugg\'erons de l’appeler formule d’It\^o-Alekseev-Gr\"obner. Il s’agit d’un outil puissant pour d\'eriver les taux d’approximation forte pour des perturbations et approximations d’\'equations diff\'erentielles stochastiques et d’\'equations \`a d\'eriv\'ees partielles stochastiques.
\end{abstract}

\begin{keyword}[class=MSC]
\kwd{60H10}
\end{keyword}

\begin{keyword}
\kwd{It\^o formula}
\kwd{Alekseev-Gr\"obner formula}
\kwd{nonlinear variation-of-constants formula}
\kwd{nonlinear integration-by-parts formula}
\kwd{perturbation of stochastic differential equations}
\kwd{strong convergence rate}
\kwd{non-globally monotone coefficients}
\kwd{small-noise analysis}
\end{keyword}

\end{frontmatter}

\allowdisplaybreaks
\section{Introduction}

The linear integration-by-parts formula states in the simplest case that for
all $a,b\in\R$, $t\in[0,\infty)$ it holds that
\begin{equation} \begin{split} 
e^{at} -e^{bt} =-\int_0^t \tfrac{d}{ds} \big(e^{a(t-s)} e^{bs} \big)\,ds
=\int_0^t e^{a(t-s)} (a-b)e^{bs} \,ds.
\end{split} \end{equation} 
The nonlinear integration-by-parts formula, which is also referred to as \emph{Alekseev-Gr\"obner formula}
or as nonlinear variation-of-constants formula,
generalizes this relation
to nonlinear ordinary differential equations and
has been
established in Alekseev~\cite{Alekseev61} and Gr\"obner~\cite{Groebner60}.
More formally,
the Alekseev-Gr\"obner formula
(cf., e.g.,\ Hairer et al.~\cite[Theorem I.14.5]{HairerNorsettWannerI})
asserts
that for all $d\in\N$, $T\in(0,\infty)$,
$\mu\in C^{0,1}([0,T]\times\R^d,\R^d)$,
$Y\in C^1([0,T],\R^d)$,
and all $X_{\mycdot,\mycdot}^{\mycdot}=(X_{s,t}^x)_{s\in[0,t],t\in[0,T],x\in\R^d}
\in C(\{(s,t)\in[0,T]^2\colon s\leq t\}\times\R^d,\R^d)$
with $\forall\,s\in[0,T]$, $t\in[s,T]$, $x\in\R^d$:
$X_{s,t}^x=x+\int_s^t\mu(r,X_{s,r}^x)\,dr$
it holds that
\begin{equation} \begin{split} \label{eq:AGL} 
  X_{0,T}^{Y_0}-Y_T=\int_0^T \Big(\tfrac{\partial}{\partial x}X_{r,T}^{Y_r}\Big)\Big(\mu(r,Y_r)- \tfrac{d}{dr} Y_r\Big)\,dr.
\end{split} \end{equation} 
Informally speaking, the Alekseev-Gr\"obner formula expresses
the global error (the term $X_{0,T}^{Y_0}-Y_T$ in~\eqref{eq:AGL})
in terms of the infinitesimal error (the term $\mu(r,Y_r)- \tfrac{d}{dr} Y_r$
in~\eqref{eq:AGL} which corresponds to the difference of time derivatives).
For this reason, the
Alekseev-Gr\"obner formula is a powerful tool
for studying perturbations of ordinary differential equations;
see, e.g., Norsett \& Wanner~\cite[Theorem 3]{NorsettWanner1979},
Lie \& Norsett~\cite[Theorem 1]{LieNorsett1989}, Iserles \& Soederlind~\cite[Theorem 1]{IserlesSoederlind1993},
and Iserles~\cite[Theorem 3.7]{Iserles2009}.

In this article we generalize the Alekseev-Gr\"obner formula to a stochastic setting
and derive the nonlinear integration-by-parts formula
for stochastic differential equations (SDEs).
Informally speaking,
one key difficulty 
in this generalization is that the integrand on the right-hand side
of~\eqref{eq:AGL} (and a similar integrand
appears in the stochastic integral in~\eqref{1553} below)
depends both on the past
(e.g.\ the term $\mu(r,Y_r)$) and on the future
(e.g.\ the term $\tfrac{\partial}{\partial x}X_{r,T}^{Y_r} $).
This precludes a generalization which is solely based on It\^o calculus.
In this article we apply Malliavin calculus
and express anticipating stochastic integrals as Skorohod integrals.
The following theorem, Theorem~\ref{thm:intro}, formulates our main contribution and establishes -- what we call -- the It\^o-Alekseev-Gr\"obner formula. 
For its formulation and throughout this article we use the notation introduced in Subsection \ref{sec:notation} below.
\begin{theorem}[It\^o-Alekseev-Gr\"obner formula] \label{thm:intro} 
Let 
$d, m, k \in \N$, 
$T, c \in (0, \infty)$, 
$p \in(4, \infty)$, 
$q \in[0,\tfrac{p}{2}-2)$,
$\xi\in\R^d$,
let 
$( \Omega, \mathcal{F}, \mathbb P)$
be a probability space, 
let
$
W \colon [0, T] \times \Omega
\rightarrow \R^m
$
be a standard Brownian motion with continuous sample paths,
let $ \mathcal N = \{ A\in\mathcal{F}\colon \mathbb{P}(A)=0\}$,
let
$ \mu \in C\big([0, T] \times \R^d , \R^d\big)$,
$ \sigma \in C\big([0, T] \times \R^d , \R^{d\times m}\big)$,
let
$X_{ \mycdot, \mycdot}^{\mycdot} =(X_{ s, t}^x)_{s\in[0,t],t\in[0,T],x\in\R^d} \colon \{(s,t)\in[0,T]^2\colon s\leq t\} \times \R^d \times \Omega \to \R^d$
be a continuous random field,
assume that for all $s\in[0,T]$, $ \omega \in \Omega$ 
it holds that
$( \R^d\ni x \mapsto X_{s, T}^x( \omega) \in \R^d)\in C^{2}( \R^d, \R^d)$, 
assume that for all $\omega\in\Omega$ it holds that
$\tfrac{\partial^2}{\partial x^2}X_{\mycdot,T}^{\mycdot}(\omega)\in C( [0,T]\times\R^d, L^{(2)}(\R^d,\R^d))$,
assume that for all 
$ s \in [0, T]$, $x\in \R^d$
the stochastic process
$[s, T] \times \Omega \ni (t, \omega) \mapsto X_{s, t}^x(\omega) \in \R^d$
is
$( \symbolsigmaalgebra(\mathcal{N}\cup \symbolsigmaalgebra(W_r - W_s \colon r \in [s, t])))_{t \in [s, T]}$-adapted,
assume that for all
$ s \in [0, T]$, $t\in[s,T]$, $x\in \R^d$
it holds $ \mathbb P$-a.s.\ that 
\begin{equation} \label{eq:SDE.intro}
\begin{split} 
X_{s, t}^x
&=
x + \smallint_s^t \mu(r, X_{s, r}^x)\,dr + \smallint_s^t \sigma(r, X_{s, r}^x)\,dW_r,
\end{split}
\end{equation}
assume that for all $s\in[0,T]$, $t\in[s,T]$, $x \in \R^d$ it holds $ \mathbb P$-a.s.\ that 
$X_{t, T}^{X_{s, t}^x} = X_{s, T}^x$,
let
$A, Y \colon[0,T] \times\Omega \to \R^d$,
$B \colon[0,T] \times\Omega \to \R^{d\times m}$
be
$( \symbolsigmaalgebra(\mathcal{N}\cup\symbolsigmaalgebra(W_r \colon r \in [0, t])))_{t \in [0, T]} $-predictable
stochastic processes,
assume that $Y$ has continuous sample paths, 
assume that $\int_0^T \mathbb E \Big[\|A_s\|_{\R^d} ^p+\|Y_s\|_{\R^d} ^p+\|B_s\|_{L(\R^m,\R^d)} ^p\Big]\,ds<\infty$,
assume that for all $t \in [0, T]$ it holds $ \mathbb P$-a.s.\ that 
\begin{equation} 
Y_t
=
\xi + \smallint_0^t A_s \,ds + \smallint_0^t B_s \,dW_s, 
\end{equation} 
assume that
\begin{equation} 
\sup_{ \substack{ s, t \in [0, T] \\ s \leq t} } 
\mathbb E \bigg [
\Big \| \mu\Big(t, X_{ s, t} ^{Y_{ s } } \Big) \Big \|_{ \R^d } ^p 
+
\Big \| \sigma \Big(t, X_{ s, t} ^{Y_{ s } } \Big) \Big\|_{L(\R^m,\R^d)}^p
\bigg]
< \infty,
\end{equation} 
assume that
\begin{equation} 
\sup_{ \substack{r, s, t \in [0, T] \\ r \leq s \leq t} } 
\mathbb E \bigg[ \Big \| X_{t, T} ^{X_{r, s} ^{Y_r} } \Big \|_{ \R^d} ^p
+
\Big \|  \tfrac{ \partial }{ \partial x} X_{t, T}^{X_{r, s} ^{Y_r}} 
\Big \|_{L(\R^d, \R^d)} ^{  \frac{4p}{p - 2(q+2)} } 
+
\Big \|
 \tfrac{ \partial^2 }{ \partial x^2} X_{t, T}^{X_{r, s} ^{Y_r}}
\Big \|_{ L^{(2)} ( \R^d, \R^d)} ^{  \frac{2p}{p - 2(q+2)} } \bigg]
< 
\infty,
\end{equation} 	
and let
$f \in C^2(\R^d, \R^k)$ 
satisfy that for all $x \in \R^d$ 
it holds that 
\begin{equation} \label{my442}
\max \Big \{
\tfrac{ \| f(x) \|_{ \R^k}}{1 + \| x \|_{ \R^d}},
\| f'(x) \|_{L( \R^d, \R^k)}, 
\|f''(x) \|_{L^{(2)}( \R^d, \R^k)}
\Big \}
\leq
c (1 + \| x \|_{ \R^d }^q).
\end{equation}
Then
the stochastic process
$
\big(
f' \big(X_{r, T}^{Y_r} \big) \tfrac{ \partial}{ \partial x} X_{r, T}^{Y_r} 
( \sigma(r, Y_r) - B_r)
\big)_{r \in [0, T]} 
$
is Skorohod-integrable and 
it holds $ \mathbb P$-a.s.\ that 
\begin{equation} \label{1553} 
\begin{split} 
&f \big(X_{0, T}^{Y_0} \big)
- 
f (Y_T )
=
\smallint_0^T
f' \big(X_{r, T}^{Y_r} \big)
 \tfrac{ \partial}{ \partial x} X_{r, T}^{Y_r}
\Big( \mu(r, Y_r) - A_r \Big) 
\,dr
+
\smallint_0^T
f' \big(X_{r, T}^{Y_r} \big)
 \tfrac{ \partial}{ \partial x} X_{r, T}^{Y_r}
 \Big( \sigma(r, Y_r) - B_r\Big)
\,\delta W_r
\\&
+ 
\tfrac{1}{2} 
\sum_{i, j=1}^d
\smallint_0^T
\Big( \sigma(r, Y_r) [\sigma(r, Y_r)]^*
- 
B_r [B_r]^*
\Big)_{i, j}
\Big(
f'' \big(X_{r, T}^{Y_r} \big)
\big(\tfrac{\partial}{\partial x}X_{r, T}^{Y_r}, \tfrac{\partial}{\partial x}X_{r, T}^{Y_r} \big)
+ 
f' \big(X_{r, T}^{Y_r} \big)
\tfrac{\partial^2}{\partial x^2}X_{r, T}^{Y_r}
\Big)
\big( e_i^{(d)}, e_j^{(d)} \big)
\,dr.
\end{split} 
\end{equation} 
\end{theorem} 
\noindent
Theorem~\ref{thm:intro} follows immediately from Theorem~\ref{thm:perturbation.formula}
(applied with $\mathbb{F}_0=\symbolsigmaalgebra(\mathcal{N})$, $O = \R^d$
in the notation of Theorem~\ref{thm:perturbation.formula}).
Theorem~\ref{thm:intro} essentially generalizes the following results from the literature:
\begin{enumerate}[(i)]
  \item Theorem~\ref{thm:intro} essentially generalizes the Alekseev-Gr\"obner formula.
     More formally, Theorem~\ref{thm:intro} (applied with $\sigma=0$, $B=0$, $k=d$, $=\textup{Id}_{\R^d}$ in the notation of Theorem~\ref{thm:intro})
     implies 
     the Alekseev-Gr\"obner formula in~\eqref{eq:AGL}
     (cf., e.g.,\ Hairer et al.~\cite[Theorem I.14.5]{HairerNorsettWannerI})
     in the case where the solution process
     is twice continuously differentiable in the space variable.
  \item Theorem~\ref{thm:intro} essentially generalizes the It\^o formula.
     More formally, Theorem~\ref{thm:intro} (applied with $\mu=0$, $\sigma=0$ in the notation of Theorem~\ref{thm:intro})
     implies the It\^o formula for It\^o processes
     (cf., e.g., Revuz \& Yor~\cite[Theorem IV.3.3]{RevuzYor1994})
     in the case
     where the It\^o process
     $Y$, its drift process $A$, and its diffusion process $B$ satisfy
     $\inf_{p\in(4,\infty)}\big(\sup_{s\in[0,T]}\E\big[\|Y_s\|_{\R^d}^p\big]
     +\int_0^T\E\big[\|A_s\|_{\R^d}^p+\|B_s\|_{L(\R^m,\R^d)}^p\big]\,ds\big)<\infty$.
     This moment requirement is due to the fact that we use the Skorohod integral.
     An approach with rough path integrals (cf., e.g., Friz \& Hairer~\cite{FrizHairer2014})
     might be suitable to generalize Theorem~\ref{thm:intro}
     so that this moment condition would not be needed.
   \item Theorem~\ref{thm:intro} essentially generalizes 
     the Alekseev-Gr\"obner formula in~\eqref{eq:AGL}
     (cf., e.g.,\ Hairer et al.~\cite[Theorem I.14.5]{HairerNorsettWannerI})
     even in the deterministic case ($\sigma=0$ and $B=0$ in the notation of Theorem~\ref{thm:intro})
     from $f=\textup{Id}_{\R^d}$ to general test functions.
     In Proposition~\ref{p:IAG} below we prove the It\^o-Alekseev-Gr\"obner formula in~\eqref{1553} in the deterministic case
     with the test function $f\colon\R^d\to\R^k$ being only in $C^1(\R^d,\R^k)$
     instead of in $C^2(\R^d,\R^k)$ as in Theorem~\ref{thm:intro} above.
     The proof of 
     Proposition~\ref{p:IAG} below is also illustrative to understand the structure of the It\^o-Alekseev-Gr\"obner formula in~\eqref{1553}.
 \item Theorem~\ref{thm:intro} essentially provides a pathwise version of the well-known weak error expansion
     (cf., e.g., Graham \& Talay \cite[(7.48) and the last Display on page 182]{GrahamTalay2013}
     or related weak error estimates in~\cite{TalayTubaro1990,DebrabantRoessler2008,TambueNgnotchouye2016}).
     More precisely, in the notation of Theorem~\ref{thm:intro} taking expectation of~\eqref{1553},
     using that the expectation of the Skorohod integral vanishes, 
     and exchanging expectations and temporal
     integrals
     results in
     the standard
     representation of the weak error
     $\E\big[ f \big(X_{0, T}^{Y_0} \big)\big] - \E\big[f (Y_T )\big]$.
\end{enumerate}
We note that
\eqref{1553} roughly follows from the case $k=d$, $f=\textup{Id}_{\R^d}$
of Theorem~\ref{thm:intro} and from the It\^o formula for anticipating processes
established in Al\`os \& Nualart \cite{AlosNualart1998} applied to the anticipating
process $[0,T]\ni t\mapsto X_{t,T}^{Y_t}\in\R^d$.
Moreover, using the two-sided stochastic integral of
Pardoux \& Protter
\cite{PardouxProtter1987},
Nualart \& Pardoux \cite[Proposition 8.2]{NualartPardoux1988}
establish a backward It\^o-Ventzell-type formula
where the random test function roughly speaking 
has the form $F(t,x,\omega)=f(t,x,Y_t(\omega))$ where $f$ is deterministic
and $Y$ is a semimartingale.
This result is not applicable to the situation of Theorem~\ref{thm:intro} 
since $x\mapsto X^x$ is a random function.
However, our proof of Theorem~\ref{thm:intro} uses ideas
of the proof of
Nualart \& Pardoux \cite[Proposition 8.2]{NualartPardoux1988}.
Moreover, after the preprint \cite{HuddeHutzenthalerJentzenMazzonetto2018}
of our paper
appeared,
Del Moral \& Singh
\cite{delMoralSingh2019,delMoralSingh2022} establish
a backward It\^o-Ventzell formula
and use this to provide a new proof of Theorem~\ref{thm:intro}
in the special case of
coefficient functions which have continuous and uniformly bounded 
spatial derivatives
up to third order.
In addition, independently of our results 
(cf.\ \cite{ArnaudonDelMoral2018}
with our preprint
\cite{HuddeHutzenthalerJentzenMazzonetto2018}),
Arnaudon \& Del Moral
\cite[(3.2)]{ArnaudonDelMoral2019} arrive at
the It\^o-Alekseev-Gr\"obner
formula in a specific situation (e.g.\ the diffusion terms are equal)
by heuristically applying
the backward It\^o-Ventzell formula which was later
established in \cite{delMoralSingh2022}.

Theorem~\ref{thm:intro} implies immediately an $L^2$-estimate.
For example 
the $L^2$-norm of the right-hand side of~\eqref{1553}
can be bounded by the triangle inequality.
The $L^2$-norm of the Skorohod integral on the right-hand side of~\eqref{1553}
can then be calculated by applying the It\^o isometry for Skorohod integrals
(see, e.g., Alos \& Nualart~\cite[Lemma 4]{AlosNualart1998}).
Another approach for obtaining $L^2$-estimates is to
apply the It\^o formula for Skorohod processes
to the squared norm of the right-hand side of~\eqref{1553}.
However this seems to require additional regularity.

Our main motivation for the It\^o-Alekseev-Gr\"obner formula
are \textit{strong convergence rates} for \textit{time-discrete numerical approximations
of  stochastic evolution equations (SEEs)}. In the literature, positive strong convergence rates
have been established for SEEs with monotone nonlinearities (see, e.g.,
\cite[Chapter 4]{LiuRoeckner2015});
see, e.g., \cite{GyoengyMillet2007,KovacsLarssonLindgren2015,JentzenPusnik2020,BeckerGessJentzenKloeden2023,BrehierGoudenege2019,BrehierCuiHong2019,LiuQiao2020AllenCahnAdditive,Wang2020,BeckerJentzen2019} for the case of additive noise
and
\cite{
HutzenthalerJentzenKloeden2012,HutzenthalerJentzen2020,Sabanis2016,DareiotisKumarSabanis2016,NgoTaguchi2016,
Prohl2018,LiuQiao2021MultiplicativeNoise,
LeonhardRoessler2018,KellyLord2018} for the case of multiplicative noise;
for lower bounds see, e.g., \cite{dg01,mr07a,mrw08a,mrw08b,BeckerGessJentzenKloeden2020}.
Recently, the classical Alekseev-Gr\"obner formula has been applied
in \cite{HutzenthalerJentzenLindnerPusnik2019} to establish strong
convergence rates for space-time discrete approximations for stochastic Burgers
equations with \textit{additive noise} by rewriting the SEE as random partial differential equation.
This demonstrates that the Alekseev-Gr\"obner formula
is a successful approach for proving convergence rates in the case of SEEs
such as the stochastic Burgers equation with additive noise.
Now the It\^o-Alekseev-Gr\"obner formula in Theorem \ref{thm:intro}
provides an approach to derive strong convergence rates
e.g.\ for stochastic Burgers equations also in the case of \textit{non-additive noise}.
Applications of this approach are left to future research.

In addition, Theorem~\ref{thm:intro} can be applied to any approximation of an SDE which is an It\^o process
with respect to the same Wiener process driving the SDE.
Possible applications (cf., e.g., \cite{HutzenthalerJentzen2020})
include, in the notation of Theorem~\ref{thm:intro},
\begin{enumerate}[(i)]
\item
strong convergence rates for \emph{time-discrete numerical approximations
of SDEs} 
(e.g., the Euler-Maruyama approximation with $N\in\N$ time discretization steps
is given by
$ A_t = \mu( \tfrac{ k T }{ N },  Y_{ \frac{ k T }{ N } } ) $ 
and
$ B_t = \sigma( \tfrac{ k T }{ N }, Y_{ \frac{ k T }{ N } } ) $ 
for all $ t \in [ \frac{ kT }{ N }, \frac{ (k+1) T }{ N } )$, $ k \in \N_0\cap[0,N) $),
\label{i:problem2} 
\item
strong convergence rates for
\emph{Galerkin approximations
for SEEs} (see, e.g., \cite{Cerrai2001}) 
(choose $ A_t = P( \mu( t, Y_t ) ) $ 
and $ B_t u = P( \sigma( t, Y_t) u ) $ 
for all $ u \in \R^m $, $ t \in [0,T] $
and some suitable projection operator $ P \in L( \R^d ) $ where $d,m\in\N$;
Theorem~\ref{thm:intro} is applied to a finite-dimensional approximation
of the exact solution of the SEE of which convergence in probability 
is known)\label{i:problem3}, and
\item
strong convergence rates for
\emph{small noise perturbations} 
of solutions of deterministic 
differential equations
(choose $ \sigma = 0 $, $ A_t = \mu( t, Y_t ) $ 
and $ B_t = \varepsilon \, \tilde{\sigma} ( t, Y_t ) $
for all $ t \in [0,T] $
where 
$
\tilde{\sigma} \colon [0,T]\times\R^d \to \R^{d\times m} 
$
is a suitable Borel measurable function and
where $ \varepsilon > 0 $ is a sufficiently small 
parameter).
\label{i:problem4} 
\end{enumerate} 
In the literature, nearly all estimates of perturbation errors
exploit the popular \emph{global monotonicity} assumption which, in the notation of Theorem~\ref{thm:intro}, 
assumes existence of a real number $c\in\R$ such that for all $x,y\in\R^d$, $t\in[0,T]$ it holds that
\begin{equation} \begin{split} \label{eq:glob_mon} 
\langle x-y,\mu(t,x)-\mu(t,y)\rangle_{\R^d} + \tfrac{1}{2} \|\sigma(t,x)-\sigma(t,y)\|^2_{ \operatorname{HS}(\R^m,\R^d)} 
\leq c\|x-y\|_{\R^d} ^2;
\end{split} \end{equation} 
cf.\ also~\cite{HutzenthalerJentzen2020} and the references therein.
We emphasize that many SDEs from the literature do not satisfy~\eqref{eq:glob_mon} 
and that Theorem~\ref{thm:intro} does not require that the global monotonicity assumption is fulfilled.

A crucial assumption in Theorem~\ref{thm:intro} is existence of a solution of the SDE~\eqref{eq:SDE.intro}
which is twice continuously differentiable in the starting point
since in the proof of Theorem~\ref{thm:intro} we apply
It\^o's formula for independent random fields
to the random functions $\R^d\ni x\mapsto X^{x}_{t,T}\in\R^d$, $t\in[0,T]$.
This assumption is not satisfied in a number of cases. For example Li \& Scheutzow~\cite{LiScheutzow2011}
construct a two-dimensional example with smooth and globally bounded coefficient functions
which is not even strongly complete (that is, the exceptional subset of $\Omega$ where~\eqref{eq:SDE.intro} fails to hold
can not be chosen independently of the starting point); cf.\ also Hairer et al.~\cite[Theorem 1.2]{HairerHutzenthalerJentzen2015}.
Under suitable assumptions on the coefficients, however, strong completeness
and existence of a solution of~\eqref{eq:SDE.intro} which is continuous in the starting point can be ensured;
see, e.g., \cite{CoxHutzenthalerJentzen2013ArXiv,Zhang2010,Li1994}.
Existence of a solution of~\eqref{eq:SDE.intro}
which is twice continuously differentiable in the starting point
is known for strongly complete SDEs whose coefficient functions have locally Lipschitz
continuous second derivatives; see \cite[Theorem V.40]{protter05}.
In future research we
show that moments of the derivative processes up to order two are finite
if the coefficient functions grow moderately at infinity;
see \cite{HuddeHutzenthalerMazzonetto2019}.
Moreover,
in the case of non-differentiable coefficients a possible approach
is to approximate the SDE by SDEs with smooth coefficients
and to apply Theorem~\ref{thm:intro} to the sequence of smoothened SDE solutions.

We prove Theorem~\ref{thm:intro} as follows.
First, we rewrite the left-hand side of equation~\eqref{1553} as telescoping sum;
see~\eqref{eq:telescope.sum} below.
Then we apply It\^o's formula
to the random functions $\R^d\ni x\mapsto X^{x}_{t,T}\in\R^d$, $t\in[0,T]$
in order to expand the local errors. Thereby we obtain It\^o integrals
which we rewrite as Skorohod integrals
by applying Proposition~\ref{l:Skorohod.generalizes.Ito} below.
These Skorohod integrals are non-standard since the integrands are in general not
measurable with respect to a Wiener process. For this reason we introduce
an extended Skorohod integral in the appendix.
Moreover, the integrands in the It\^o integrals are adapted to different 
filtrations. We apply Proposition~\ref{l:Skorohod.on.T} below
in order to carefully rewrite the sum of these integrals as a single 
Skorohod integral.

\subsection{Notation} \label{sec:notation} 

The following notation is used throughout this article.
We denote by $ \N$ and by $ \N_0$ the sets satisfying that 
$ \N = \{ 1, 2, 3, \dots \}$
and 
$ \N_0 = \N \cup \{ 0 \}$.
For all $c \in (0, \infty)$ let $0^0$, $\tfrac{0}{0}$, $ \frac{c}{0}$, $ \frac{-c}{0}$, $0\cdot\infty$, $0\cdot(-\infty)$, $ \infty^c$
denote the extended real numbers $0^0 =1$, $\tfrac{0}{0}=0$, $ \frac{c}{0}=\infty$, $ \frac{-c}{0}=-\infty$, $0\cdot\infty=0$, $0\cdot(-\infty)=0$,
and $ \infty^c = \infty$. 
For all $T\in [0, \infty)$ let $ \Delta_T \subseteq [0, T]^2$ denote the subset with the property that 
$ \Delta_T = \{(s, t) \in[0, T]^2 \colon s \leq t \}$
and
denote by $ \nicefrac{T}{ \N}$ the set 
$ \nicefrac{T}{ \N} = \{\nicefrac{T}{n} \colon n \in \N \}$. 
For all 
$h \in (0, \infty)$, 
$r \in [0, \infty)$ 
let 
$\lceil r \rceil_h,
\lfloor r \rfloor_h, 
\lceil r \rceil_0,
\lfloor r \rfloor_0
\in [0, \infty)$ 
be the real numbers with the properties that 
$\lceil r \rceil_h = \inf \{nh \in[r, \infty) \colon n \in \N_0 \}$,
$\lfloor r \rfloor_h = \sup \{nh \in[0, r] \colon n \in \N_0 \}$,
$\lceil r \rceil_0 = r$,
and
$\lfloor r \rfloor_0 = r$.
For a real vector space $V$ and a subset $S \subseteq V$ let $ \operatorname{span}(S) \subseteq V$ denote the set with the property that 
$ \operatorname{span}(S) = \{ \sum_{i=1}^n r_i v_i \colon n \in \N, r_1, \dots, r_n \in \R, v_1, \dots, v_n \in V \}$. 
For all $(s, t) \in \Delta_T$ let $ \lambda_{[s, t]}$ be the Lebesgue-measure restricted to the Borel-sigma-algebra of $[s, t]$. 
For all $d \in \N$, $x \in \R^d$ we write $ \| x \|_{ \R^d}$ for the Euclidean norm of $x$
and for all $i \in \{ 1, \dots , d \}$ let $e^{(d)}_i$ denote the $i$-th unit vector in $ \R^d$. 
For every set $\Omega$
we denote by $\symbolsigmaalgebra(\mathcal{E})$ the smallest $\sigma$-algebra
generated by $\mathcal{E}\subseteq\mathcal{P}(\Omega)$.
For all measurable spaces $( \Omega, \mathcal F)$, $( \Omega ', \mathcal{B})$ let 
$ \mathcal M( \mathcal F, \mathcal{B})$ 
be the set 
$ \mathcal M( \mathcal F, \mathcal{B}) = \{ f \colon \Omega \to \Omega ' \colon f \textnormal{ is } \mathcal F / \mathcal{B} \textnormal{-measurable} \}$. 
For every measure space $( \Omega, \mathcal F, \mu)$, every normed vector space $(V, \| \cdot \|_V)$, and all $p \in [1, \infty)$ 
let $ \mathcal{B}(V)$ denote the Borel-sigma-algebra on $V$,
let 
$ \mathcal L^p( \mu; V)$ 
be the set with the property that
$ \mathcal L^p( \mu; V)
=
\{ f \in \mathcal M ( \mathcal F, \mathcal{B}(V)) \colon \smallint_\Omega \| f \|_V^p d \mu < \infty \}$,
let $L^p( \mu; V)$ be the set with the property that 
$
L^p( \mu, V)
=
\big \{ \{f \in \mathcal L^p( \mu, V) \colon f=g ~\mu \textnormal{-a.e.} \} \colon g \in \mathcal L^p( \mu, V) \big \},
$
and let
\begin{equation}
\| \cdot \|_{L^p( \mu; V)} 
\colon 
\big( \mathcal M( \mathcal F, \mathcal B(V)) \cup \big \{ \{f \in \mathcal M( \mathcal F, \mathcal B) \colon f=g ~\mathbb \mu\textnormal{-a.e.} \} \colon g \in \mathcal M( \mathcal F, \mathcal B) \big \} \big)
\to [0, \infty ]
\end{equation}
be the function which satisfies for all 
$f \in \big( \mathcal M( \mathcal F, \mathcal B(V)) \cup \big \{ \{h \in \mathcal M( \mathcal F, \mathcal B) \colon h=g ~\mathbb \mu\textnormal{-a.e.} \} \colon$ $g \in \mathcal M( \mathcal F, \mathcal B) \big \} \big)$
that
$ \| f \|_{ L^p( \mu; V)} = \big( \smallint_ \Omega \| f \|_V^p d \mu \big)^ \frac{1}{p}$.
For all $d, m \in \N$ and all $A \in \R^{d \times m}$ we denote by $A^* $ the transpose of $A$. 
For every measurable space $( \Omega, \mathcal F )$ and every $n\in\N$
let 
$C_b^{ \! \infty, \mathcal{F} }( \R^n \times \Omega, \R)$
be the set which satisfies that
\begin{equation}  \begin{split}
  C_b^{ \! \infty, \mathcal{F}}( \R^n \times \Omega, \R)
  =
\begin{Bmatrix}
  f\colon\R^n\times\Omega\to \R\colon & \forall \omega\in\Omega\colon f(\cdot,\omega)\in C^\infty_b(\R^n,\R),
  \\& \forall x\in\R^d\colon f(x,\cdot)\text{ is }\mathcal{F}/\mathcal{B}(\R)\text{-measurable}
\end{Bmatrix}.
\end{split}     \end{equation}
For all $d,k\in\N$ we denote by $L^{(2)}(\R^d,\R^k)$ the set of bilinear functions
from $(\R^d)^2$ to $\R^k$.
 
%
%
%
%

\section{The It\^o-Alekseev-Gr\"obner formula in the deterministic case}
The following proposition, Proposition~\ref{p:IAG}, generalizes
the Alekseev-Gr\"obner formula
(cf., e.g.,\ Hairer et al.~\cite[Theorem I.14.5]{HairerNorsettWannerI})
(which is the special case $k=d$, $f=\textup{Id}_{\R^d}$ of Proposition~\ref{p:IAG})
to general test functions.
\begin{prop}[Deterministic It\^o-Alekseev-Gr\"obner formula]\label{p:IAG}
  Let $d,k\in\N$, $T\in(0,\infty)$, let $O\subseteq \R^d$
  be a non-empty open set,
  let
  $\mu\in C^{0,1}([0,T]\times O,\R^d)$,
  $Y\in C^1([0,T],O)$,
  $X_{\mycdot,\mycdot}^{\mycdot}=(X_{s,t}^x)_{s\in[0,t],t\in[0,T],x\in O}\in C(\{(s,t)\in[0,T]^2\colon s\leq t\}\times O,O)$,
  $f\in C^1(O,\R^k)$,
  and
  assume for all $s\in[0,T]$, $t\in[s,T]$, $x\in O$ that
  $
    X_{s,t}^x=x+\int_s^t\mu(r,X_{s,r}^x)\,dr.
  $
  Then
  \begin{equation}  \begin{split}
    f(X_{0,T}^{Y_0})-f(Y_T)=\int_0^T f'(X_{s,T}^{Y_s})\tfrac{\partial}{\partial x}X_{s,T}^{Y_s}\Big(\mu(s,Y_s)-\tfrac{d}{ds}Y_s\Big)\,ds.
  \end{split}     \end{equation}
\end{prop}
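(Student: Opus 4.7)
The plan is to introduce the auxiliary function $g\colon[0,T]\to\R^k$ defined by $g(s)=f(X_{s,T}^{Y_s})$ and to express the difference $f(X_{0,T}^{Y_0})-f(Y_T)=g(0)-g(T)$ via the fundamental theorem of calculus. The boundary identity $X_{T,T}^{Y_T}=Y_T$, which follows immediately from the integral equation defining $X$, gives $g(T)=f(Y_T)$, while $g(0)=f(X_{0,T}^{Y_0})$ holds by construction. Hence, once $g$ is shown to be continuously differentiable on $[0,T]$, the claim will reduce to the single identity $f(X_{0,T}^{Y_0})-f(Y_T)=-\int_0^T g'(s)\,ds$.

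To compute $g'(s)$ I would apply the chain rule to $g(s)=f(X_{s,T}^{Y_s})$, noting that $X_{s,T}^{Y_s}$ depends on $s$ both through the initial time $s$ and through the initial point $Y_s$. The hypothesis $\mu\in C^{0,1}([0,T]\times O,\R^d)$, combined with classical results on differentiability of the flow of an ODE with respect to initial time and initial data, ensures that $(s,x)\mapsto X_{s,T}^x$ is of class $C^1$ on $[0,T]\times O$. Since $Y\in C^1([0,T],O)$ and $f\in C^1(O,\R^k)$, the chain rule then yields
\begin{equation*}
  g'(s)=f'(X_{s,T}^{Y_s})\,\Big(\tfrac{\partial}{\partial s}X_{s,T}^{x}\big|_{x=Y_s}+\tfrac{\partial}{\partial x}X_{s,T}^{Y_s}\,\tfrac{d}{ds}Y_s\Big).
\end{equation*}

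The key step is then to identify $\tfrac{\partial}{\partial s}X_{s,T}^x$. For this I would exploit the cocycle identity $X_{s,T}^x=X_{t,T}^{X_{s,t}^x}$ for $s\leq t\leq T$, which follows from uniqueness for the integral equation. Differentiating both sides in $t$ at $t=s$ (the left-hand side being independent of $t$) and using $\tfrac{d}{dt}X_{s,t}^x\big|_{t=s}=\mu(s,x)$ gives
\begin{equation*}
  \tfrac{\partial}{\partial s}X_{s,T}^x=-\tfrac{\partial}{\partial x}X_{s,T}^x\,\mu(s,x).
\end{equation*}
Substituting back produces $g'(s)=-f'(X_{s,T}^{Y_s})\tfrac{\partial}{\partial x}X_{s,T}^{Y_s}\bigl(\mu(s,Y_s)-\tfrac{d}{ds}Y_s\bigr)$, and integrating from $0$ to $T$ yields the stated formula. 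The main technical obstacle is not the algebraic content (one chain rule plus the cocycle relation) but the careful justification of the joint $C^1$ regularity of the flow and of the identification of its partial derivative in the initial time; these must be imported from classical ODE theory, since the statement of the proposition only assumes continuity of $X$ directly.
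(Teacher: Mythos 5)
Your proposal is correct and follows essentially the same route as the paper's proof: introduce $g(s)=f(X_{s,T}^{Y_s})$, apply the fundamental theorem of calculus to $g(0)-g(T)$, compute $g'$ via the chain rule, and substitute the identity $\tfrac{\partial}{\partial s}X_{s,T}^x=-\tfrac{\partial}{\partial x}X_{s,T}^x\,\mu(s,x)$. The only cosmetic difference is that the paper cites Hairer, N\o rsett \& Wanner (Theorems I.14.3 and I.14.4) to obtain both the joint $C^1$ regularity of the flow and the formula for $\partial_s X_{s,T}^x$ directly, whereas you propose to cite regularity from classical ODE theory and then derive that formula yourself by differentiating the cocycle relation $X_{s,T}^x=X_{t,T}^{X_{s,t}^x}$ at $t=s$; this is a perfectly standard way to prove the very statement the textbook records, so it amounts to the same argument with a slightly different division between cited and derived facts.
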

\begin{proof}[Proof of Proposition~\ref{p:IAG}]
   The assumptions and the fundamental theorem of calculus
   imply for all $s\in[0,T)$, $t\in[s,T]$, $x\in O$ that $([s,T]\ni u\mapsto X_{s,u}^x\in O)\in C^{1}([s,T],O)$
   and that
   $\tfrac{\partial}{\partial t}X_{s,t}^x =\mu(t,X_{s,t}^x)$.
   This, the assumptions,
   and
   Hairer et al.~\cite[Theorem I.14.3]{HairerNorsettWannerI}) prove 
   that
   for all $s\in[0,T]$, $t\in[s,T]$
   it holds that
   $(O\ni x\mapsto X_{s,t}^x\in O)\in C^{1}(O,O)$
   and that $\tfrac{\partial}{\partial x}X_{\mycdot,\mycdot}^{\mycdot}\in C(\{(s,t)\in[0,T]^2\colon s\leq t\}\times O,L(\R^d,\R^d))$.
   Moreover, the assumptions,
   and
   Hairer et al.~\cite[Theorem I.14.4]{HairerNorsettWannerI}) show 
   that
   for all $x\in O$
   it holds that
   $([0,T]\ni s\mapsto X_{s,T}^x\in O)\in C^{1}([0,T],O)$,
   that $\tfrac{\partial}{\partial s}X_{\mycdot,T}^{\mycdot}\in C([0,T]\times O,\R^d)$,
   and that
   for all $s\in[0,T]$, $x\in O$ it holds that
   \begin{equation}  \begin{split}\label{eq:partial.initial}
     \tfrac{\partial}{\partial s}X_{s,T}^x=-\tfrac{\partial}{\partial x}X_{s,T}^x\mu(s,x).
   \end{split}     \end{equation}
   Therefore, the chain rule implies that $([0,T]\ni s\mapsto X_{s,T}^{Y_s}\in O)\in C^{1}([0,T],O)$.
   Moreover, the fundamental theorem of calculus, the chain rule, and~\eqref{eq:partial.initial} yield that
   \begin{equation}  \begin{split}
    f(X_{0,T}^{Y_0})-f(Y_T)
    &= -\int_0^T \tfrac{d}{ds}\Big(f\big(X_{s,T}^{Y_s}\big)\Big)\,ds
    \\&
    = -\int_0^T f'(X_{s,T}^{Y_s})\left(\Big(\tfrac{\partial }{\partial s}X_{s,T}^{x}\Big)\Big|_{x=Y_s}+\tfrac{\partial}{\partial x}X_{s,T}^{Y_s}\tfrac{d}{ds} Y_s\right)\,ds
    \\&
    = -\int_0^T f'(X_{s,T}^{Y_s})\Big(-\tfrac{\partial}{\partial x}X_{s,T}^{Y_s}\mu(s,Y_s)+\tfrac{\partial}{\partial x}X_{s,T}^{Y_s}\tfrac{d}{ds} Y_s\Big)\,ds
    \\&
    = \int_0^T f'(X_{s,T}^{Y_s})\tfrac{\partial}{\partial x}X_{s,T}^{Y_s}\Big(\mu(s,Y_s)-\tfrac{d}{ds}Y_s\Big)\,ds.
   \end{split}     \end{equation}
   This finishes the proof of Proposition~\ref{p:IAG}.
\end{proof}

\section{The It\^o-Alekseev-Gr\"obner formula in the general case}

The following theorem, Theorem~\ref{thm:perturbation.formula},
is the main result of this article. We note that throughout this article we use notation introduced
in Subsection~\ref{sec:notation} and in the Appendix.

\begin{theorem}[It\^o-Alekseev-Gr\"obner formula]
\label{thm:perturbation.formula}
Let 
$d, m, k \in \N$, 
$T, c \in (0, \infty)$, 
$p \in(4, \infty)$, 
$q \in[0,\tfrac{p}{2}-2)$,
let 
$( \Omega, \mathcal{F}, \mathbb P)$
be a probability space, 
let
$
W \colon [0, T] \times \Omega
\rightarrow \R^m
$
be a standard Brownian motion,
let $ \mathcal N = \{ A\in\mathcal{F}\colon \mathbb{P}(A)=0\}$,
let $\mathbb{F}=(\mathbb{F}_t)_{t\in[0,T]}$ be a filtration on $(\Omega,\mathcal{F})$ which satisfies
that $\mathbb{F}_0$ and $\symbolsigmaalgebra(W_s\colon s\in[0,T])$ are independent
and which satisfies
for all $t\in[0,T]$ that $\mathbb{F}_t=\symbolsigmaalgebra(\mathbb{F}_0\cup\symbolsigmaalgebra(W_s\colon s\in[0,t])\cup \mathcal{N})$,
let $O \subseteq \R^d$ be a non-empty open set, 
let
$ \mu \colon [0, T] \times O \to \R^d$,
$ \sigma \colon [0, T] \times O \to \R^{d\times m}$
be continuous functions,
let
$X_{ \mycdot, \mycdot}^ {\mycdot}  \colon \Delta_T \times O \times \Omega \to O$, 
$X_{ \mycdot, T}^{1, \mycdot}  \colon [0,T] \times O \times \Omega \to L( \R^d, \R^d)$,
and
$X_{ \mycdot, T}^{2, \mycdot}  \colon [0,T] \times O \times \Omega \to L^{(2)}( \R^d, \R^d)$
be continuous random fields,
assume that for all $s\in[0,T]$, $\omega\in\Omega$
it holds that
$(O \ni x \mapsto X_{s, T}^x( \omega) \in O)\in C^{2}( O, O)$, 
assume that for all 
$s\in[0,T]$, $x\in O$
the stochastic process
$[s, T] \times \Omega \ni (t, \omega) \mapsto X_{s, t}^x \in O$
is
$(\mathbb{F}_t)_{t\in[s,T]}$-adapted,
assume that for all $s\in[0,T]$,
$t \in [s, T]$, $x\in O$ it holds $ \mathbb P$-a.s.\ that 
\begin{equation} \label{1491}
\begin{split} 
X_{s, t}^x
&=
x + \smallint_s^t \mu(r, X_{s, r}^x) \,dr + \smallint_s^t \sigma(r, X_{s, r}^x)\,dW_r,
\end{split}
\end{equation}
assume that for all $(s,t) \in \Delta_T$, $x \in O$  it holds $ \mathbb P$-a.s.\ that 
$X_{t, T}^{X_{s, t}^x} = X_{s, T}^x$,
assume that for all
$(s, x, \omega) \in [0,T] \times O \times \Omega$
it holds that
$X_{s, T}^{1, x}( \omega) = \frac{ \partial}{ \partial x }\big(X_{s, T}^x( \omega)\big)$
and
$X_{s, T}^{2, x}( \omega) = \frac{ \partial^2}{ \partial x^2 }\big(X_{s, T}^x( \omega)\big)$,
let
$Y \in \mathcal L^p( \lambda_{[0, T]} \otimes \mathbb P; O)$,
$A \in \mathcal L^p( \lambda_{[0, T]} \otimes \mathbb P; \R^d)$,
$B \in \mathcal L^p( \lambda_{[0, T]} \otimes \mathbb P; \R^{d\times m})$
be stochastic processes,
assume that $Y$ has continuous sample paths, 
assume that $Y$ and $B$ are $\mathbb{F}$-predictable,
assume that
for all $t \in [0, T]$ it holds $ \mathbb P$-a.s.\ that 
\begin{equation} \label{30081}
Y_t
=
Y_0 + \smallint_0^t A_s \,ds + \smallint_0^t B_s \,dW_s,
\end{equation}
assume that
\begin{equation}\label{442b} 
\sup_{h \in \nicefrac{T}{ \N} } 
\mathbb E \bigg [
\smallint_0^T \Big \| \mu\Big(t, X_{ \lfloor t \rfloor _h, t} ^{Y_{ \lfloor t \rfloor _h } } \Big) \Big \|_{ \R^d } ^p 
+
\Big \| \sigma \Big(t, X_{ \lfloor t \rfloor _h, t} ^{Y_{ \lfloor t \rfloor _h } } \Big) \Big \|_{ \operatorname{HS}(\R^m,\R^d) } ^p \,dt
\bigg]
< \infty,
\end{equation} 
assume that
\begin{equation} \label{inthemoodforlove1}
\sup_{ \substack{r, s, t \in [0, T] \\ r \leq s \leq t}}
\mathbb E \bigg[ \Big \| X_{t, T}^{X_{r, s}^{Y_r}} \Big \|_{ \R^d}^p
+
\Big \| X_{t, T}^{1, X_{r, s}^{Y_r}} \Big \|_{L(\R^d, \R^d)}^{ \! \frac{4p}{p - 2(q + 2)}} 
+
\Big \|
X_{t, T}^{2, X_{r, s}^{Y_r}} 
\Big \|_{ L^{(2)}( \R^d, \R^d)}^{ \! \frac{2p}{p - 2(q + 2)}} \bigg]
< 
\infty , 
\end{equation}	
and let
$f \in C^2(O, \R^k)$ 
satisfy that for all $x \in O$ 
it holds that 
\begin{equation} \label{442}
\max \Big \{
\tfrac{ \| f(x) \|_{ \R^k}}{1 + \| x \|_{ \R^d}},
\| f'(x) \|_{L( \R^d, \R^k)}, 
\|f''(x) \|_{L^{(2)}( \R^d, \R^k)}
\Big \}
\leq
c (1 + \| x \|_{ \R^d }^q).
\end{equation}
Then
the stochastic process
$
\big(
f' \big(X_{r, T}^{Y_r} \big)X_{r, T}^{1, Y_r}
( \sigma(r, Y_r) - B_r)
\big)_{r \in [0, T]}
$
is Skorohod-integrable	and
it holds $ \mathbb P$-a.s.\ that 
\begin{equation}
\begin{split} \label{eq:l.perturbation.formula}
&f \big(X_{0, T}^{Y_0} \big)
- 
f (Y_T )
=
\smallint_0^T
f' \big(X_{r, T}^{Y_r} \big)
X_{r, T}^{1, Y_r}
\Big( \mu(r, Y_r) - A_r \Big) \,dr
+ 
\smallint_0^T
f' \big(X_{r, T}^{Y_r} \big)
X_{r, T}^{1, Y_r}
\Big( \sigma(r, Y_r) - B_r \Big) 
\,\delta W_r^{\mathbb{F}_0}
\\
& 
+ 
\tfrac{1}{2} 
\sum_{i, j=1}^d
\smallint_0^T
\Big( \sigma(r, Y_r) [\sigma(r, Y_r)]^*
- 
B_r [B_r]^*
\Big)_{i, j}
\Big(
f'' \big(X_{r, T}^{Y_r} \big)
\big(X_{r, T}^{1, Y_r}, X_{r, T}^{1, Y_r} \big)
+ 
f' \big(X_{r, T}^{Y_r} \big)
X_{r, T}^{2, Y_r}
\Big)
\big( e_i^{(d)}, e_j^{(d)} \big)
\,dr.
\end{split}
\end{equation} 
\end{theorem}

\begin{proof}[Proof of Theorem~\ref{thm:perturbation.formula}.]

The fact that for all $ \omega \in \Omega$ the function 
$O \ni x \mapsto X_{T, T}^x ( \omega) \in O$ is continuous and equation~\eqref{1491} imply that it holds $ \mathbb P$-a.s.\ that 
$X_{T, T}^{Y_T} = Y_T$. 
Moreover, we rewrite the left-hand side of equation~\eqref{eq:l.perturbation.formula} as telescoping sum and obtain that for all $n \in \N$, $h \in \{ \tfrac{T}{n} \}$ it holds $ \mathbb P$-a.s.\ that
\begin{equation} \label{eq:telescope.sum}
\begin{split} 
&f \big(X_{0, T}^{Y_{0}} \big)
- 
f \big(Y_T \big)
=
f \big(X_{0h, T}^{Y_{0h}} \big)
- 
f \big(X_{nh, T}^{Y_{nh}} \big)
=
\sum_{i=0}^{n - 1}
\Big(
f \big(
X_{ih, T}^{Y_{ih}}
\big)
- 
f \big(
X_{(i + 1)h, T}^{Y_{(i + 1)h}}
\big)
\Big)
\\&
=
\sum_{i=0}^{n - 1}
\Big(
f \big(
X_{ih, T}^{Y_{ih}}
\big)
- 
f \big(
X_{(i + 1)h, T}^{Y_{ih}}
\big)
\Big)
- 
\sum_{i=0}^{n - 1}
\Big(
f \big( X_{(i + 1)h, T}^{Y_{(i + 1)h}} \big)
- 
f \big( X_{(i + 1)h, T}^{Y_{ih}} \big)
\Big).
\end{split} 
\end{equation}

\textbf{First, we analyze the second sum on the right-hand side of equation~\eqref{eq:telescope.sum}.}
For all $t \in [0, T]$, $x \in O$, $i \in \{1, 2\}$ 
the functions 
$ \Omega \ni \omega \mapsto X_{t, T}^x ( \omega) \in O$,
$ \Omega \ni \omega \mapsto X_{t, T}^{i, x} ( \omega) \in L^{(i)}( \R^d, \R^d)$
are
$ \mathfrak{S}(\mathcal{N}\cup\mathfrak{S} (W_s-W_t \colon s \in [t, T]))$-measurable.
This together with the fact that for all 
$ \omega \in \Omega$, 
$t \in [0, T]$ 
it holds that 
$ \Big( O \ni x \mapsto f(X_{t, T}^x ( \omega)) \in \R^k \Big) \in C^2 (O, \R^k)$
implies that for all $t \in [0, T]$ the function
$ \Omega \ni \omega \mapsto \Big(O \ni x \mapsto f(X_{t, T}^x ( \omega)) \in \R^k \Big) \in C^2 (O, \R^k)$
is independent of the sigma-algebra $ \mathbb{F}_t $. 
It\^o's formula for independent random fields
(e.g., Klenke~\cite[Theorem 25.30 and Remark 25.26]{Klenke2008}) 
(applied with the functions
$ \Omega \ni \omega \mapsto \Big(O \ni x \mapsto f(X_{(i + 1)h, T}^x ( \omega)) \in \R^k \Big)\in C^2(O,\R^k)$ 
for $n\in\N$, $i \in \{0, 1, \dots, n - 1 \}$, $h \in \{ \tfrac{T}{n} \}$)
yields that
for all $n \in \N$, $i \in \{0, 1, \dots, n - 1 \}$, $h \in \{ \tfrac{T}{n} \}$
it holds $ \mathbb P$-a.s.\ that
\begin{equation} \label{eq:equality.for.last.summand} 
\begin{split} 
&
f \big( X_{(i + 1)h, T}^{Y_{(i + 1)h}} \big) 
- 
f \big( X_{(i + 1)h, T}^{Y_{ih}} \big) 
\\&=
\smallint_{ih}^{(i + 1)h}
\tfrac{ \partial}{ \partial x} \big( f(X_{(i + 1)h, T}^{x}) \big) \big|_{x=Y_r}
\,dY_r
+ \tfrac{1}{2}
\sum_{l, j=1}^d
\smallint_{ih}^{(i + 1)h}
\tfrac{ \partial^2}{ \partial x^2} \big( f(X_{(i + 1)h, T}^{x}) \big) \big|_{x=Y_r}
(e_l^{(d)}, e_j^{(d)})
\,d \left( \langle Y \rangle_r \right)_{l, j}
\\ &
=
\smallint_{ih}^{(i + 1)h}
f' \big(X_{ (i + 1)h , T}^{Y_r} \big)
X_{ (i + 1)h , T}^{1, Y_r} A_r \,dr 
+ 
\smallint_{ih}^{(i + 1)h}
f' \big(X_{ (i + 1)h , T}^{Y_r} \big)
X_{ (i + 1)h , T}^{1, Y_r} B_r \,dW_r 
\\ &
\quad + \tfrac{1}{2}
\sum_{l, j = 1}^d
\smallint_{ih}^{(i + 1)h}
\left(B_r [B_r]^* \right)_{l, j}
\Big( f''\big( X_{ (i + 1)h , T}^{Y_r} \big) 
\big( X_{ (i + 1)h , T}^{1, Y_r}, X_{ (i + 1)h , T}^{1, Y_r} \big)
+
f'\big( X_{ (i + 1)h , T}^{Y_r} \big) X_{ (i + 1)h , T}^{2, Y_r} \Big)
(e_l^{(d)}, e_j^{(d)})
\,dr.
\end{split}
\end{equation}
Inequalities~\eqref{442} and~\eqref{inthemoodforlove1} imply 
for all $i \in \{1, 2 \}$  that
\begin{equation} \label{443}
\begin{split}
\sup_{ \substack{r, s, t \in [0, T] \\ r \leq s \leq t}}
\Big \|
f^{(i)} 
\big(X_{ t, T}^{X_{r, s}^{Y_r}} \big) 
\Big \|_{L^ \frac{p}{q} ( \mathbb P; L^{(i)}( \R^d, \R^k))} 
&
\leq
c
\sup_{ \substack{r, s, t \in [0, T] \\ r \leq s \leq t}}
\Big \|
1
+ 
\Big \| 
X_{t, T}^{X_{r, s}^{Y_r}} 
\Big \|_{ \R^d}^q
\Big \|_{ L^ \frac{p}{q} ( \mathbb P; \R)}
\\&
\leq
c
\Bigg( 
1
+ 
\sup_{ \substack{r, s, t \in [0, T] \\ r \leq s \leq t}}
\Big \|
X_{t, T}^{X_{r, s}^{Y_r}} 
\Big \|_{ L^p ( \mathbb P; \R^d)}^q
\Bigg)
< \infty. 
\end{split}
\end{equation}
H\"older's inequality, inequalities~\eqref{inthemoodforlove1},~\eqref{443}, and the assumption 
$B \in \mathcal L^p( \lambda_{[0, T]} \otimes \mathbb P; \R^{d\times m})$ 
imply that for all $n \in \N$, $i \in \{0, 1, \dots, n - 1 \}$, $h \in \{ \tfrac{T}{n} \}$ it holds that 
\begin{align} \label{238}
&
\Big \|
f' \big(X_{(i + 1)h , T}^{Y_{ \mycdot}} \big) X_{(i + 1)h , T}^{1, Y_{ \mycdot}} B_ \cdot
\Big \|_{L^2( \mathbb{P}; L^2(\lambda_{[ih, (i + 1)h]} ; \R^{k\times m}))}
\leq
\Big \|
f' \big(X_{\lceil \mycdot\rceil_h , T}^{Y_{ \mycdot}} \big) X_{\lceil \mycdot\rceil_h , T}^{1, Y_{ \mycdot}} B_ \cdot
\Big \|_{L^2( \lambda_{[0,T]} \otimes \mathbb P; \R^{k\times m})}
\nonumber
\\&
\leq
\Big \|
\big \| 
f' \big(X_{\lceil \mycdot\rceil_h , T}^{Y_{ \mycdot}} \big) 
\big \|_{L( \R^d, \R^k)} 
\big \|
X_{\lceil \mycdot\rceil_h , T}^{1, Y_{ \mycdot}} 
\big \|_{L( \R^d, \R^d)} 
\|
B_ \cdot
\|_{ \operatorname{HS}(\R^m,\R^d)} 
\Big \|_{L^2( \lambda_{[0,T]} \otimes \mathbb P; \R)}
\\&
\leq
\big \|
f' \big(X_{\lceil \mycdot\rceil_h , T}^{Y_{ \mycdot}} \big) 
\big \|_{L^{ \! \frac{p}{q}}( \lambda_{[0,T]} \otimes \mathbb P; L( \R^d, \R^k))} 
\big \|
X_{\lceil \mycdot\rceil_h , T}^{1, Y_{ \mycdot}} 
\big \|_{L^{ \! \frac{2p}{p - 2(q + 1)}}( \lambda_{[0,T]} \otimes \mathbb P; L( \R^d, \R^d))} 
\|
B
\|_{L^p( \lambda_{[0,T]} \otimes \mathbb P; \R^{d\times m})}
\nonumber
\\&
\leq
T^{  \frac{p - 2}{2p}}
\Big(
\sup_{(r, s) \in \Delta_T} 
\big \|
f' \big(X_{s, T}^{Y_r} \big) 
\big \|_{L^{ \! \frac{p}{q}}( \mathbb P; L( \R^d, \R^k))} 
\Big)
\Big(
\sup_{(r, s) \in \Delta_T} 
\big \|
X_{s, T}^{1, Y_r} 
\big \|_{L^{ \! \frac{2p}{p - 2(q + 1)}}( \mathbb P; L( \R^d, \R^d))} 
\Big)
\|
B
\|_{L^p( \lambda_{[0, T]} \otimes \mathbb P; \R^{d\times m})}
< \infty.
\nonumber
\end{align}
For all $n \in \N$, $i \in \{0, 1, \dots, n - 1 \}$, $h \in \{ \tfrac{T}{n} \}$ the stochastic process
$
\big( f' \big(X_{(i + 1)h, T}^{Y_r} \big)
X_{(i + 1)h, T}^{1, Y_r} B_r
\big)_{r \in [ih, (i + 1)h]}
$
is predictable with respect to the filtration
\begin{equation}  \begin{split}\label{eq:filtration}
 \big( \mathfrak{S} \big(\mathbb{F}_r\cup\mathfrak{S} \big( \{ W_s - W_{(i + 1)h} \colon s \in [(i + 1)h, T] \} \big)\big) \big)_{r \in[ih, (i + 1)h]}. 
\end{split}     \end{equation}
Proposition~\ref{l:Skorohod.generalizes.Ito} together with
inequality~\eqref{238}, Proposition~\ref{l:Skorohod.on.T},
and linearity of the Skorohod integral
yield that for all $h\in T/\N$ it holds that
$ (f' \big(X_{\lceil r\rceil_h, T}^{Y_r} \big) X_{\lceil r\rceil_h, T}^{1, Y_r} B_r)_{r\in[0,T]} $
is Skorohod-integrable
and that
for all $n \in \N$, $h \in \{ \tfrac{T}{n} \}$ it holds $ \mathbb P$-a.s.\ that
\begin{equation} \label{hier}
\begin{split}
&
\sum_{i=0}^{n - 1}
\smallint_{ih}^{(i + 1)h}
f' \big(X_{(i + 1)h, T}^{Y_r} \big)
X_{(i + 1)h, T}^{1, Y_r} B_r \,dW_r
\\&
=
\sum_{i=0}^{n - 1}
\smallint_{ih}^{(i + 1)h}
f' \big(X_{(i + 1)h, T}^{Y_r} \big)
X_{(i + 1)h, T}^{1, Y_r} B_r
\,\delta W_r^{\mathfrak{S}( \mathbb{F}_{ih} \cup\mathfrak{S}( \{ W_s - W_{(i + 1)h} \colon s \in [(i + 1)h, T] \}) ) }
\\&
=
\sum_{i=0}^{n - 1}
\smallint_0^T
\1_{[ih, (i + 1)h]}(r)
f' \big(X_{ \lceil r \rceil_h , T}^{Y_r} \big)
X_{ \lceil r \rceil_h , T}^{1, Y_r} B_r 
\, \delta W_r^{ \mathbb{F}_0}
\\&
=
\smallint_0^T
f' \big(X_{ \lceil r \rceil_h , T}^{Y_r} \big)
X_{ \lceil r \rceil_h , T}^{1, Y_r} B_r 
\, \delta  W_r^{\mathbb{F}_0}.
\end{split}
\end{equation}
Equations~\eqref{eq:equality.for.last.summand} and~\eqref{hier} imply that for all $n \in \N$, $h \in \{ \tfrac{T}{n} \}$ it holds $ \mathbb P$-a.s.\ that
\begin{equation} \label{grts}
\begin{split} 
&\sum_{i=0}^{n - 1}
\Big(
f \big(
X_{(i + 1)h, T}^{Y_{(i + 1)h}}
\big)
- 
f \big(
X_{(i + 1)h, T}^{Y_{ih}}
\big)
\Big)
\\&
=
\smallint_0^T
f' \big(X_{ \lceil r \rceil_h , T}^{Y_r} \big)
X_{ \lceil r \rceil_h , T}^{1, Y_r} A_r \,dr
+ 
\smallint_0^T
f' \big(X_{ \lceil r \rceil_h , T}^{Y_r} \big)
X_{ \lceil r \rceil_h , T}^{1, Y_r} B_r
\,\delta
W_r^{\mathbb{F}_0}
\\&
\quad + \tfrac{1}{2}
\sum_{l, j = 1}^d
\smallint_0^T
\left(B_r [B_r]^* \right)_{l, j}
\Big( f''\big( X_{ \lceil r \rceil_h , T}^{Y_r} \big) \big( X_{ \lceil r \rceil_h , T}^{1, Y_r}, X_{ \lceil r \rceil_h , T}^{1, Y_r} \big)
+
f'\big( X_{ \lceil r \rceil_h , T}^{Y_r} \big) X_{ \lceil r \rceil_h , T}^{2, Y_r} \Big)
(e_l^{(d)}, e_j^{(d)})
\,dr.
\end{split}
\end{equation}
\textbf{Next we analyze the first sum on the right-hand side of equation~\eqref{eq:telescope.sum}.}
For all $(s, t) \in \Delta_T$, $x \in O$ it holds that 
$
\mathbb P
\big(X_{s, T}^x = X_{t, T}^{X_{s, t}^x} \big) =1
$. 
This and the fact that $X$ is a continuous random field
imply for all 
$(s, t) \in \Delta_T$ that 
$ \mathbb P \Big( X_{s, T}^{Y_s} = X_{t, T}^{ \! X_{s, t}^{Y_s}} \Big)=1$.
For all $t \in [0, T]$, $x \in O$, $i \in \{1, 2\}$ 
the functions 
$ \Omega \ni \omega \mapsto X_{t, T}^x ( \omega) \in O$,
$ \Omega \ni \omega \mapsto X_{t, T}^{i, x} ( \omega) \in L^{(i)}( \R^d, \R^d)$
are
$\mathfrak{S}(\mathcal{N}\cup \mathfrak{S} (W_s -W_t\colon s \in [t, T]))$-measurable.
This together with the fact that for all 
$ \omega \in \Omega$, 
$t \in [0, T]$ 
it holds that 
$ \Big(O \ni x \mapsto f(X_{t, T}^x ( \omega)) \in \R^k) \Big) \in C^2 (O, \R^k)$
implies that for all $t \in [0, T]$ the function
$ \Omega \ni \omega \mapsto \Big(O \ni x \mapsto f(X_{t, T}^x ( \omega)) \in \R^k) \Big)\in C^2(O,\R^k)$
is independent of the sigma-algebra
$\mathbb{F}_t$. 
It\^o's formula for independent random fields
(e.g., Klenke~\cite[Theorem 25.30 and Remark 25.26]{Klenke2008}) 
(applied with the functions
$ \Omega \ni \omega \mapsto \Big( O \ni x \mapsto f(X_{(i + 1)h, T}^x ( \omega)) \in \R^k \Big)\in C^2(O,\R^k)$
for 
$n \in \N$, $i \in \{0, 1, \dots, n - 1 \}$, $h \in \{ \tfrac{T}{n} \}$)
yields that
for all $n \in \N$, $i \in \{0, 1, \dots, n - 1 \}$, $h \in \{ \tfrac{T}{n} \}$
it holds $ \mathbb P$-a.s.\ that
\begin{equation} \label{242}
\begin{split}
&f\big( X_{ih, T}^{Y_{ih}} \big)
- 
f \big( X_{(i + 1)h, T}^{Y_{ih}} \big)
=
f \Big(X_{(i + 1)h, T}^{X_{ih, (i + 1)h}^{Y_{ih}}} \Big) 	
- 
f \big( X_{(i + 1)h, T}^{Y_{ih}} \big) 
\\&
= 
\smallint_{ih}^{(i + 1)h} f' \Big(X_{(i + 1)h, T}^{X_{ih, r}^{Y_{ih}}} \Big) X_{(i + 1)h, T}^{1, X_{ih, r}^{Y_{ih}}} \,d X_{ih, r}^{Y_{ih}} 
\\& 
\quad 
+ 
\tfrac{1}{2} \sum_{l, j = 1}^d \smallint_{ih}^{(i + 1)h} 
\bigg( f'' \bigg( X_{(i + 1)h, T}^{X_{ih, r}^{Y_{ih}}} \bigg) 
\bigg( X_{(i + 1)h, T}^{1, X_{ih, r}^{Y_{ih}}}, X_{(i + 1)h, T}^{1, X_{ih, r}^{Y_{ih}}} \bigg)
+ f' \bigg( X_{(i + 1)h, T}^{X_{ih, r}^{Y_{ih}}} \bigg) X_{(i + 1)h, T}^{2, X_{ih, r}^{Y_{ih}}} \bigg)
\big( e^{(d)}_l, e^{(d)}_j \big)
\,d \big( \langle X_{ih, \mycdot}^{Y_{ih}} \rangle_r \big)_{l, j} 
\\&
= 
\smallint_{ih}^{(i + 1)h} 
f' \bigg(X_{ \lceil r \rceil_h, T}^{X_{ \lfloor r \rfloor_h, r}^{Y_{ \lfloor r \rfloor_h}}} \bigg) 
X_{ \lceil r \rceil_h, T}^{1, X_{ \lfloor r \rfloor_h, r}^{Y_{ \lfloor r \rfloor_h}}} 
\mu \big( r, X_{\lfloor r\rfloor_h, r}^{Y_{\lfloor r\rfloor_h}} \big) \,dr
+ 
\smallint_{ih}^{(i + 1)h} f' \bigg(X_{ \lceil r \rceil_h, T}^{X_{ \lfloor r \rfloor_h, r}^{Y_{ \lfloor r \rfloor_h}}} \bigg) 
X_{ \lceil r \rceil_h, T}^{1, X_{ \lfloor r \rfloor_h, r}^{Y_{ \lfloor r \rfloor_h}}} 
\sigma\big( r, X_{\lfloor r\rfloor_h, r}^{Y_{\lfloor r\rfloor_h}} \big) \,dW_r 
\\&
\quad 
+ \tfrac{1}{2} 
\sum_{l, j = 1}^d \smallint_{ih}^{(i + 1)h} 
\Big( \sigma \Big(r, X_{ \lfloor r \rfloor_h, r}^{Y_{ \lfloor r \rfloor_h}} \Big) \Big[\sigma \Big(r, X_{ \lfloor r \rfloor_h, r}^{Y_{ \lfloor r \rfloor_h}} \Big)\Big]^* \Big)_{l, j}
\\&
\quad \quad \quad \quad
\cdot
\bigg( f'' \bigg( X_{ \lceil r \rceil_h, T}^{X_{ \lfloor r \rfloor_h, r}^{Y_{ \lfloor r \rfloor_h}}} \bigg) 
\bigg(X_{ \lceil r \rceil_h, T}^{1, X_{ \lfloor r \rfloor_h, r}^{Y_{ \lfloor r \rfloor_h}}}, X_{ \lceil r \rceil_h, T}^{1, X_{ \lfloor r \rfloor_h, r}^{Y_{ \lfloor r \rfloor_h}}} \bigg)
+ f' \bigg( X_{ \lceil r \rceil_h, T}^{X_{ \lfloor r \rfloor_h, r}^{Y_{ \lfloor r \rfloor_h}}} \bigg) 
X_{ \lceil r \rceil_h, T}^{2, X_{ \lfloor r \rfloor_h, r}^{Y_{ \lfloor r \rfloor_h}}}
\bigg)
\big( e^{(d)}_l, e^{(d)}_j \big)
\,dr.
\end{split} 
\end{equation}
H\"{o}lder's inequality and inequalities~\eqref{443},~\eqref{inthemoodforlove1},~\eqref{442b} imply that for all $n \in \N$, $i \in \{0, 1, \dots, n - 1 \}$, $h \in \{ \tfrac{T}{n} \}$ it holds that
\begin{align} \label{245}
&
\nonumber
\Bigg \|
f' \Bigg(X_{\lceil \mycdot\rceil_h , T}^{X_{ \lfloor \mycdot \rfloor_{h}, \mycdot}^{Y_{ \lfloor \mycdot \rfloor_{h}}}} \Bigg) 
X_{\lceil \mycdot\rceil_h , T}^{1, X_{ \lfloor \mycdot \rfloor_{h}, \mycdot}^{Y_{ \lfloor \mycdot \rfloor_{h}}}} 
\sigma \Big( \cdot, X_{ \lfloor \mycdot \rfloor _h, \mycdot }^{Y_{ \lfloor \mycdot \rfloor _h }} \Big)
\Bigg \|_{L^2(\mathbb{P}; L^2( \lambda_{[ih, (i + 1)h]} ; \R^{k\times m}))} 
\\&\nonumber
\leq
\Bigg \|
f' \Bigg(X_{\lceil \mycdot \rceil_h , T}^{X_{ \lfloor \mycdot \rfloor_{h}, \mycdot}^{Y_{ \lfloor \mycdot \rfloor_{h}}}} \Bigg) 
X_{\lceil \mycdot \rceil_h , T}^{1, X_{ \lfloor \mycdot \rfloor_{h}, \mycdot}^{Y_{ \lfloor \mycdot \rfloor_{h}}}} 
\sigma \Big( \cdot, X_{ \lfloor \mycdot \rfloor _h, \mycdot }^{Y_{ \lfloor \mycdot \rfloor _h }} \Big)
\Bigg \|_{L^2( \lambda_{[0,T]} \otimes \mathbb P; \R^{k\times m})} 
\\&
\leq
\Bigg \|
\Bigg \| 
f' \Bigg(X_{\lceil \mycdot \rceil_h , T}^{X_{ \lfloor \mycdot \rfloor_{h}, \mycdot}^{Y_{ \lfloor \mycdot \rfloor_{h}}}} \Bigg) 
\Bigg \|_{L( \R^d, \R^k)} 
\Bigg \|
X_{\lceil \mycdot \rceil_h , T}^{1, X_{ \lfloor \mycdot \rfloor_{h}, \mycdot}^{Y_{ \lfloor \mycdot \rfloor_{h}}}} 
\Bigg \|_{L( \R^d, \R^d)} 
\Big \|
\sigma \Big( \cdot, X_{ \lfloor \mycdot \rfloor _h, \mycdot }^{Y_{ \lfloor \mycdot \rfloor _h }} \Big)
\Big \|_{ \operatorname{HS}(\R^m,\R^d)} 
\Bigg \|_{L^2( \lambda_{[0,T]} \otimes \mathbb P; \R)}
\\&
\nonumber
\leq
\Bigg \|
f' \Bigg(X_{\lceil \mycdot \rceil_h , T}^{X_{ \lfloor \mycdot \rfloor_{h}, \mycdot}^{Y_{ \lfloor \mycdot \rfloor_{h}}}} \Bigg) 
\Bigg \|_{L^{ \! \frac{p}{q}}( \lambda_{[0,T]} \otimes \mathbb P; L( \R^d, \R^k))} 
\Bigg \|
X_{\lceil \mycdot \rceil_h , T}^{1, X_{ \lfloor \mycdot \rfloor_{h}, \mycdot}^{Y_{ \lfloor \mycdot \rfloor_{h}}}} 
\Bigg \|_{L^{ \! \frac{2p}{p - 2(q + 1)}}( \lambda_{[0,T]} \otimes \mathbb P; L( \R^d, \R^d))} 
\Big \|
\sigma \Big( \cdot, X_{ \lfloor \mycdot \rfloor _h, \mycdot }^{Y_{ \lfloor \mycdot \rfloor _h }} \Big)
\Big \|_{L^p( \lambda_{[0,T]} \otimes \mathbb P; \R^{d\times m})}
\\&
\leq
T^{ \! \frac{p - 2}{2p}}
\Bigg(
\sup_{ \substack{r, s, t \in [0, T] \\ r \leq s \leq t}} 
\bigg \| 
f' \bigg(X_{t, T}^{X_{r, s}^{Y_r}} \bigg) 
\bigg \|_{L^{ \! \frac{p}{q}}( \mathbb P; L( \R^d, \R^k))} 
\Bigg)
\Bigg( 
\sup_{ \substack{r, s, t \in [0, T] \\ r \leq s \leq t}} 
\bigg \|
X_{t, T}^{1, X_{r, s}^{Y_r}} 
\bigg \|_{L^{ \! \frac{2p}{p - 2(q + 1)}}( \mathbb P; L( \R^d, \R^d))} 
\Bigg)
\nonumber
\\&
\quad \quad \quad \quad \quad \quad \quad \quad \quad \quad \quad \quad \quad \quad \quad \quad \quad \quad \quad \quad
\cdot
\Big(
\sup_{ \kappa \in \nicefrac{T}{N}}
\Big \|
\sigma \Big( \cdot, X_{ \lfloor \mycdot \rfloor _ \kappa, \mycdot }^{Y_{ \lfloor \mycdot \rfloor _ \kappa }} \Big)
\Big \|_{L^p( \lambda_{[0, T]} \otimes \mathbb P; \R^{d\times m})}
\Big)
< \infty.
\nonumber
\end{align}
For all $n \in \N$, $i \in \{0, 1, \dots, n - 1 \}$, $h \in \{ \tfrac{T}{n} \}$ the process
$
\Big(
f' \Big(X_{ (i+1)h, T}^{X_{ ih, r}^{Y_{ ih}}} \Big) 
X_{ (i+1)h, T}^{1, X_{ ih, r}^{Y_{ ih }}} 
\sigma \Big( r, X_{ ih, r}^{Y_{ ih }} \Big)
\Big)_{r \in [ih, (i + 1)h]}
$
is predictable with respect to the filtration~\eqref{eq:filtration}.
Proposition~\ref{l:Skorohod.generalizes.Ito} together with inequality~\eqref{245},
Proposition~\ref{l:Skorohod.on.T},
and linearity of the Skorohod integral
assert that the process
$f' \Big(X_{ \lceil \mycdot \rceil_h, T}^{X_{ \lfloor \mycdot \rfloor_h, \mycdot}^{Y_{ \lfloor \mycdot \rfloor_h}}} \Big) 
X_{ \lceil \mycdot \rceil_h, T}^{1, X_{ \lfloor \mycdot \rfloor_h, \mycdot}^{Y_{ \lfloor \mycdot \rfloor_h }}} 
\sigma \Big( \cdot, X_{ \lfloor \mycdot \rfloor _h, \mycdot}^{Y_{ \lfloor \mycdot \rfloor _h }} \Big) 
$
is Skorohod-integrable
and that
for all $n \in \N$, $h \in \{ \tfrac{T}{n} \}$ it holds $ \mathbb P$-a.s.\ that
\begin{equation} \label{hitthelights}
\begin{split}
& 
\sum_{i=0}^{n - 1} 
\smallint_{ih}^{(i + 1)h} 
f' \Bigg( X_{ \lceil r \rceil_h, T}^{X_{ \lfloor r \rfloor_h, r}^{Y_{ \lfloor r \rfloor_h}}} \Bigg) 
X_{ \lceil r \rceil_h, T}^{1, X_{ \lfloor r \rfloor_h, r}^{Y_{ \lfloor r \rfloor_h}}} 
\sigma \Big( r, X_{ \lfloor r \rfloor _h, r}^{Y_{ \lfloor r \rfloor _h }} \Big) 
\,dW_r
\\&
= 
\sum_{i=0}^{n - 1} 
\smallint_{ih}^{(i + 1)h} 
f' \Bigg( X_{ \lceil r \rceil_h, T}^{X_{ \lfloor r \rfloor_h, r}^{Y_{ \lfloor r \rfloor_h}}} \Bigg) 
X_{ \lceil r \rceil_h, T}^{1, X_{ \lfloor r \rfloor_h, r}^{Y_{ \lfloor r \rfloor_h}}} 
\sigma \Big( r, X_{ \lfloor r \rfloor _h,r }^{Y_{ \lfloor r \rfloor _h }} \Big) 
\, \delta W_r^{ \mathfrak{S}(\mathbb{F}_{ih}\cup\mathfrak{S}(W_s-W_{(i+1)h}\colon s\in[(i+1)h,T]))}
\\&
= \sum_{i=0}^{n - 1}
\smallint_0^T 
\mathbbm 1_{[ih, (i + 1)h]}(r) 
f' \bigg(X_{ \lceil r \rceil_h, T}^{X_{ \lfloor r \rfloor_h, r}^{Y_{ \lfloor r \rfloor_h}}} \bigg) 
X_{ \lceil r \rceil_h, T}^{1, X_{ \lfloor r \rfloor_h, r}^{Y_{ \lfloor r \rfloor_h}}} 
\sigma \Big(r, X_{ \lfloor r \rfloor _h, r}^{Y_{ \lfloor r \rfloor _h }} \Big) 
\, \delta W_r^{\mathbb{F}_0}
\\&
= 
\smallint_0^T
f' \bigg(X_{ \lceil r \rceil_h, T}^{X_{ \lfloor r \rfloor_h, r}^{Y_{ \lfloor r \rfloor_h}}} \bigg) 
X_{ \lceil r \rceil_h, T}^{1, X_{ \lfloor r \rfloor_h, r}^{Y_{ \lfloor r \rfloor_h }}} 
\sigma \Big( r, X_{ \lfloor r \rfloor _h, r}^{Y_{ \lfloor r \rfloor _h }} \Big) 
\,\delta
W_r^{\mathbb{F}_0}.
\end{split} 
\end{equation}
Equations~\eqref{242} and~\eqref{hitthelights} imply that for all $n \in \N$, $h \in \{ \tfrac{T}{n} \}$ it holds $ \mathbb P$-a.s.\ that
\begin{equation}  \begin{split}
\label{erstersummand}
&
\sum_{i=0}^{n - 1} 
\Big(
f \big( X_{ih, T}^{Y_{ih}} \big) - f \big(X_{(i + 1)h, T}^{Y_{ih}} \big) 
\Big)
\\&
=
\smallint_0^T 
f' \bigg( X_{ \lceil r \rceil_h, T}^{X_{ \lfloor r \rfloor_h, r}^{Y_{ \lfloor r \rfloor_h}}} \bigg) 
X_{ \lceil r \rceil_h, T}^{1, X_{ \lfloor r \rfloor_h, r}^{Y_{ \lfloor r \rfloor_h }}} 
\mu \Big( r, X_{ \lfloor r \rfloor _h, r}^{Y_{ \lfloor r \rfloor _h }} \Big) 
\,dr 
+ 
\smallint_0^T 
f' \bigg(X_{ \lceil r \rceil_h, T}^{X_{ \lfloor r \rfloor_h, r}^{Y_{ \lfloor r \rfloor_h}}} \bigg) 
X_{ \lceil r \rceil_h, T}^{1, X_{ \lfloor r \rfloor_h, r}^{Y_{ \lfloor r \rfloor_h }}} 
\sigma \Big( r, X_{ \lfloor r \rfloor _h, r}^{Y_{ \lfloor r \rfloor _h }} \Big) 
\,\delta
W_r^{\mathbb{F}_0} 
\\&
+ \tfrac{1}{2} 
\sum_{l, j = 1}^d \smallint_0^T
\Big( \sigma \big(r, X_{ \lfloor r \rfloor_h, r}^{Y_{ \lfloor r \rfloor_h}} \big) \big[\sigma \big(r, X_{ \lfloor r \rfloor_h, r}^{Y_{ \lfloor r \rfloor_h}} \big)\big]^* \Big)_{l, j}
\\&
\quad \quad \quad \quad
\cdot
\Bigg( 
f'' \bigg( X_{ \lceil r \rceil_h, T}^{X_{ \lfloor r \rfloor_h, r}^{Y_{ \lfloor r \rfloor_h}}} \bigg) 
\bigg(X_{ \lceil r \rceil_h, T}^{1, X_{ \lfloor r \rfloor_h, r}^{Y_{ \lfloor r \rfloor_h}}}, X_{ \lceil r \rceil_h, T}^{1, X_{ \lfloor r \rfloor_h, r}^{Y_{ \lfloor r \rfloor_h}}} \bigg)
+ f' \bigg( X_{ \lceil r \rceil_h, T}^{X_{ \lfloor r \rfloor_h, r}^{Y_{ \lfloor r \rfloor_h}}} \bigg) 
X_{ \lceil r \rceil_h, T}^{2, X_{ \lfloor r \rfloor_h, r}^{Y_{ \lfloor r \rfloor_h}}}
\Bigg)
\big( e^{(d)}_l, e^{(d)}_j \big)
\,dr.
\end{split}     \end{equation}
Equations~\eqref{eq:telescope.sum},~\eqref{erstersummand}, and~\eqref{grts} imply that for all
$h \in \nicefrac{T}{ \N }$ it holds $ \mathbb P$-a.s.\ that 
\begin{align} \label{fastfertig}
&f \big(X_{0, T}^{Y_0} \big) - f \big(Y_T \big) 
\nonumber
\\&
=
\smallint_0^T f' \bigg(X_{ \lceil r \rceil_h, T}^{X_{ \lfloor r \rfloor_h, r}^{Y_{ \lfloor r \rfloor_h}}} \bigg) X_{ \lceil r \rceil_h, T}^{1, X_{ \lfloor r \rfloor_h, r}^{Y_{ \lfloor r \rfloor_h }}} \mu \big(r, X_{ \lfloor r \rfloor _h, r}^{Y_{ \lfloor r \rfloor _h }} \big) 
- 
f' \big(X_{ \lceil r \rceil_h , T}^{Y_r} \big)
X_{ \lceil r \rceil_h , T}^{1, Y_r} A_r \,dr
\nonumber
\\&
+ 
\smallint_0^T 
f' \bigg(X_{ \lceil r \rceil_h, T}^{X_{ \lfloor r \rfloor_h, r}^{Y_{ \lfloor r \rfloor_h}}} \bigg) X_{ \lceil r \rceil_h, T}^{1, X_{ \lfloor r \rfloor_h, r}^{Y_{ \lfloor r \rfloor_h }}} \sigma \Big(r, X_{ \lfloor r \rfloor _h, r}^{Y_{ \lfloor r \rfloor _h }} \Big) 
- 
f' \big(X_{ \lceil r \rceil_h , T}^{Y_r} \big)
X_{ \lceil r \rceil_h , T}^{1, Y_r} B_r 
\,\delta
W_r^{\mathbb{F}_0} 
\nonumber
\\&
+ 
\tfrac{1}{2} 
\sum_{l, j = 1}^d 
\smallint_0^T
\Big( \sigma \big(r, X_{ \lfloor r \rfloor_h, r}^{Y_{ \lfloor r \rfloor_h }} \big) \big[\sigma \big(r, X_{ \lfloor r \rfloor_h , r}^{Y_{ \lfloor r \rfloor_h}} \big)\big]^* \Big)_{l, j}
\\&
\quad \quad \quad \quad
\cdot
\Bigg( f'' \bigg( X_{ \lceil r \rceil_h, T}^{X_{ \lfloor r \rfloor_h, r}^{Y_{ \lfloor r \rfloor_h}}} \bigg) \bigg(X_{ \lceil r \rceil_h, T}^{1, X_{ \lfloor r \rfloor_h, r}^{Y_{ \lfloor r \rfloor_h}}}, X_{ \lceil r \rceil_h, T}^{1, X_{ \lfloor r \rfloor_h, r}^{Y_{ \lfloor r \rfloor_h}}} \bigg)
+ f' \bigg( X_{ \lceil r \rceil_h, T}^{X_{ \lfloor r \rfloor_h, r}^{Y_{ \lfloor r \rfloor_h}}} \bigg) X_{ \lceil r \rceil_h, T}^{2, X_{ \lfloor r \rfloor_h, r}^{Y_{ \lfloor r \rfloor_h}}}
\Bigg)
\big( e^{(d)}_l, e^{(d)}_j \big)
\,dr
\nonumber
\\&
- 
\tfrac{1}{2}
\sum_{l, j = 1}^d
\smallint_0^T
\left(B_r [B_r]^* \right)_{l, j}
\Big( f''\big( X_{ \lceil r \rceil_h , T}^{Y_r} \big) \big( X_{ \lceil r \rceil_h , T}^{1, Y_r}, X_{ \lceil r \rceil_h , T}^{1, Y_r} \big)
+
f'\big( X_{ \lceil r \rceil_h , T}^{Y_r} \big) X_{ \lceil r \rceil_h , T}^{2, Y_r} \Big)
(e_l^{(d)}, e_j^{(d)})
\,dr.
\nonumber
\end{align}
Next we want to let $\nicefrac{T}{\N}\ni h\to 0$ in~\eqref{fastfertig} in a suitable sense and first justify this.
H\"{o}lder's inequality,
inequalities~\eqref{443},~\eqref{inthemoodforlove1},~\eqref{442b},
and the fact that
$A \in \mathcal L^p( \lambda_{[0, T]} \otimes \mathbb P; \R^d)$
imply that 
\begin{equation} \label{243}
\begin{split}
\sup_{h \in \nicefrac{T}{ \N }}
&
\Bigg \| 
f' \Bigg( X_{ \lceil \mycdot \rceil_h, T}^{X_{ \lfloor \mycdot \rfloor_h, \mycdot}^{Y_{ \lfloor \mycdot \rfloor_h}}} \Bigg) 
X_{ \lceil \mycdot \rceil_h, T}^{1, X_{ \lfloor \mycdot \rfloor_h, \mycdot}^{Y_{ \lfloor \mycdot \rfloor_h }}} 
\mu \Big( \cdot, X_{ \lfloor \mycdot \rfloor _h, \mycdot }^{Y_{ \lfloor \mycdot \rfloor _h }} \Big) 
- 
f' \big(X_{ \lceil \mycdot \rceil_h , T}^{Y_{\mycdot}} \big)
X_{ \lceil \mycdot \rceil_h , T}^{1, Y_{\mycdot}} A_{\mycdot} 
\Bigg \|_{L^2( \lambda_{[0, T]} \otimes \mathbb P; \R^k)}
\\&
\leq
T^{ \! \frac{p - 2}{2p}}
\Big(
\sup_{ \substack{r, s, t \in [0, T] \\ r \leq s \leq t}} 
\Big \| 
f' \Big(X_{t, T}^{X_{r, s}^{Y_r}} \Big) 
\Big \|_{L^{ \! \frac{p}{q}}( \mathbb P; L( \R^d, \R^k))} 
\Big)
\Big( 
\sup_{ \substack{r, s, t \in [0, T] \\ r \leq s \leq t}} 
\Big \|
X_{t, T}^{1, X_{r, s}^{Y_r}} 
\Big \|_{L^{ \! \frac{2p}{p - 2(q + 1)}}( \mathbb P; L( \R^d, \R^d))} 
\Big)
\\&
\quad \quad \quad \quad \quad \quad 
\cdot
\Big(
\sup_{h \in \nicefrac{T}{ \N }}
\Big \|
\mu \big( \cdot, X_{ \lfloor \mycdot \rfloor _h, \mycdot }^{Y_{ \lfloor \mycdot \rfloor _h }} \big)
\Big \|_{L^p( \lambda_{[0, T]} \otimes \mathbb P; \R^d)}
+
\|
A
\|_{L^p( \lambda_{[0, T]} \otimes \mathbb P; \R^d)} 
\Big)
<
\infty.
\end{split}
\end{equation}
%
H\"older's inequality and inequalities~\eqref{443} and~\eqref{inthemoodforlove1} imply that for all $l, j \in \{1, \dots, d \}$ it holds that
\begin{equation} \label{1551}
\begin{split}
&
\sup_{h \in \nicefrac{T}{ \N }}
\Bigg \|
\Bigg( f'' \Bigg( X_{ \lceil \mycdot \rceil_h, T}^{X_{ \lfloor \mycdot \rfloor_h, \mycdot}^{Y_{ \lfloor \mycdot \rfloor_h}}} \Bigg) 
\Bigg( X_{ \lceil \mycdot \rceil_h, T}^{1, X_{ \lfloor \mycdot \rfloor_h, \mycdot}^{Y_{ \lfloor \mycdot \rfloor_h}}}, X_{ \lceil \mycdot \rceil_h, T}^{1, X_{ \lfloor \mycdot \rfloor_h, \mycdot}^{Y_{ \lfloor \mycdot \rfloor_h}}} \Bigg)
+ 
f' \Bigg( X_{ \lceil \mycdot \rceil_h, T}^{X_{ \lfloor \mycdot \rfloor_h, \mycdot}^{Y_{ \lfloor \mycdot \rfloor_h}}} \Bigg) 
X_{ \lceil \mycdot \rceil_h, T}^{2, X_{ \lfloor \mycdot \rfloor_h, \mycdot}^{Y_{ \lfloor \mycdot \rfloor_h}}}
\Bigg) 
\big( e^{(d)}_l, e^{(d)}_j \big)
\Bigg \|_{L^{ \! \frac{2p}{p - 4}}( \lambda_{[0, T]} \otimes \mathbb P; \R^k)}
\\&
\leq
\sup_{h \in \nicefrac{T}{ \N }}
\Bigg \|
\Bigg \|
f'' \Bigg( X_{ \lceil \mycdot \rceil_h, T}^{X_{ \lfloor \mycdot \rfloor_h, \mycdot}^{Y_{ \lfloor \mycdot \rfloor_h}}} \Bigg)
\Bigg \|_{L^{(2)}( \R^d, \R^k)}
\Bigg \| 
X_{ \lceil \mycdot \rceil_h, T}^{1, X_{ \lfloor \mycdot \rfloor_h, \mycdot}^{Y_{ \lfloor \mycdot \rfloor_h}}}
\Bigg \|_{L( \R^d, \R^d)}^2
\\&\qquad\qquad
+ 
\Bigg \|
f' \Big( X_{ \lceil \mycdot \rceil_h, T}^{X_{ \lfloor \mycdot \rfloor_h, \mycdot}^{Y_{ \lfloor \mycdot \rfloor_h}}} \Big) 
\Bigg \|_{L( \R^d, \R^k)}
\Bigg \|
X_{ \lceil \mycdot \rceil_h, T}^{2, X_{ \lfloor \mycdot \rfloor_h, \mycdot}^{Y_{ \lfloor \mycdot \rfloor_h}}}
\Bigg \|_{L^{(2)}( \R^d, \R^d)}
\Bigg \|_{L^{ \! \frac{2p}{p - 4}}( \lambda_{[0, T]} \otimes \mathbb P; \R)}
\\&
\leq
\sup_{h \in \nicefrac{T}{ \N }}
\Bigg(
\Bigg \|
f'' \Bigg( X_{ \lceil \mycdot \rceil_h, T}^{X_{ \lfloor \mycdot \rfloor_h, \mycdot}^{Y_{ \lfloor \mycdot \rfloor_h}}} \Bigg)
\Bigg \|_{L^{ \! \frac{p}{q}}( \lambda_{[0, T]} \otimes \mathbb P; L^{(2)}( \R^d, \R^k))}
\Bigg \|
X_{ \lceil \mycdot \rceil_h, T}^{1, X_{ \lfloor \mycdot \rfloor_h, \mycdot}^{Y_{ \lfloor \mycdot \rfloor_h}}}
\Bigg \|_{L^{ \! \frac{4p}{p - 2(q + 2)}}( \lambda_{[0, T]} \otimes \mathbb P; L( \R^d, \R^d))}^2
\\&
\quad \quad \quad \quad 
+ 
\Bigg \|
f' \Bigg( X_{ \lceil \mycdot \rceil_h, T}^{X_{ \lfloor \mycdot \rfloor_h, \mycdot}^{Y_{ \lfloor \mycdot \rfloor_h}}} \Bigg) 
\Bigg \|_{L^{ \! \frac{p}{q}}( \lambda_{[0, T]} \otimes \mathbb P; L( \R^d, \R^k))}
\Bigg \|
X_{ \lceil \mycdot \rceil_h, T}^{2, X_{ \lfloor \mycdot \rfloor_h, \mycdot}^{Y_{ \lfloor \mycdot \rfloor_h}}}
\Bigg \|_{L^{ \! \frac{2p}{p - 2(q + 2)}}( \lambda_{[0, T]} \otimes \mathbb P; L^{(2)}( \R^d, \R^d))}
\Bigg)
\\&
\leq
T^{ \! \frac{p - 4}{2p}}
\Bigg(
\bigg(
\sup_{ \substack{r, s, t \in [0, T] \\ r \leq s \leq t}} 
\Big \| 
f'' \Big(X_{t, T}^{X_{r, s}^{Y_r}} \Big) 
\Big \|_{L^{ \! \frac{p}{q}}( \mathbb P; L^{(2)}( \R^d, \R^k))} 
\bigg)
\bigg( 
\sup_{ \substack{r, s, t \in [0, T] \\ r \leq s \leq t}} 
\Big \|
X_{t, T}^{1, X_{r, s}^{Y_r}} 
\Big \|_{L^{ \! \frac{4p}{p - 2(q + 2)}}( \mathbb P; L( \R^d, \R^d))}^2 
\bigg)
\\&
\quad \quad \quad \quad 
+ 
\bigg(
\sup_{ \substack{r, s, t \in [0, T] \\ r \leq s \leq t}} 
\Big \| 
f' \Big(X_{t, T}^{X_{r, s}^{Y_r}} \Big) 
\Big \|_{L^{ \! \frac{p}{q}}( \mathbb P; L( \R^d, \R^k))} 
\bigg)
\bigg( 
\sup_{ \substack{r, s, t \in [0, T] \\ r \leq s \leq t}} 
\Big \|
X_{t, T}^{2, X_{r, s}^{Y_r}} 
\Big \|_{L^{ \! \frac{2p}{p - 2(q + 2)}}( \mathbb P; L^{(2)}( \R^d, \R^d))}
\bigg)
\Bigg)
< 
\infty
\end{split}
\end{equation}
and, analogously, that for all $i,j\in\{1,\ldots,d\}$ it holds that
\begin{equation} \label{1552}
\sup_{h \in \nicefrac{T}{ \N }}
\Big \|
\Big( f''(X_{ \lceil \mycdot \rceil_h , T}^{Y_{\mycdot}}) \big( X_{ \lceil \mycdot \rceil_h , T}^{1, Y_{\mycdot}}, X_{ \lceil \mycdot \rceil_h , T}^{1, Y_{\mycdot}} \big)
+
f'(X_{ \lceil \mycdot \rceil_h , T}^{Y_{\mycdot}}) X_{ \lceil \mycdot \rceil_h , T}^{2, Y_{\mycdot}} \Big)
\big( e^{(d)}_i, e^{(d)}_j \big)
\Big \|_{ L^ \frac{2p}{p - 4} ( \lambda_{[0, T]} \otimes \mathbb P; \R^k)}
<
\infty.
\end{equation}
The fact that for all $C \in \R^{d \times m}$ it holds that 
$ \sum_{i, j =1}^d |(C C^*)_{i, j}| \leq d \| C \|_{ \operatorname{HS}( \R^m, \R^d)}^2$, 
H\"older's inequality, 
assumption~\eqref{442b},
and inequality~\eqref{1551}
imply that 
\begin{equation}  \begin{split} \label{theconvergence64}
& 
\sup_{h \in \nicefrac{T}{ \N}}
\tfrac{1}{2} 
\Bigg \|
\sum_{l, j = 1}^d 
\bigg( \sigma \Big( \cdot, X_{ \lfloor \mycdot \rfloor_h, \mycdot}^{Y_{ \lfloor \mycdot \rfloor_h }} \Big) 
\Big[\sigma \Big( \cdot, X_{ \lfloor \mycdot \rfloor_h , \mycdot}^{Y_{ \lfloor \mycdot \rfloor_h}} \Big)\Big]^* \bigg)_{l, j}
\\&
\quad
\cdot
\Bigg(
f'' \Bigg( X_{ \lceil \mycdot \rceil_h, T}^{X_{ \lfloor \mycdot \rfloor_h, \mycdot}^{Y_{ \lfloor \mycdot \rfloor_h}}} \Bigg) 
\Bigg( X_{ \lceil \mycdot \rceil_h, T}^{1, X_{ \lfloor \mycdot \rfloor_h, \mycdot}^{Y_{ \lfloor \mycdot \rfloor_h}}}, X_{ \lceil \mycdot \rceil_h, T}^{1, X_{ \lfloor \mycdot \rfloor_h, \mycdot}^{Y_{ \lfloor \mycdot \rfloor_h}}} \Bigg)
+ f' \Bigg( X_{ \lceil \mycdot \rceil_h, T}^{X_{ \lfloor \mycdot \rfloor_h, \mycdot}^{Y_{ \lfloor \mycdot \rfloor_h}}} \Bigg) 
X_{ \lceil \mycdot \rceil_h, T}^{2, X_{ \lfloor \mycdot \rfloor_h, \mycdot}^{Y_{ \lfloor \mycdot \rfloor_h}}}
\Bigg)
\big( e^{(d)}_l, e^{(d)}_j \big)
\Bigg \|_{L^2( \lambda_{[0, T]} \otimes \mathbb P; \R^k)}
\\&\leq
\sup_{h \in \nicefrac{T}{ \N}}
\tfrac{1}{2} 
\Bigg \|
d\left\|\sigma\Big( \cdot, X_{ \lfloor \mycdot \rfloor_h, \mycdot}^{Y_{ \lfloor \mycdot \rfloor_h }} \Big)\right\|_{\operatorname{HS}( \R^m, \R^d))}^2
\\&
\quad
\cdot
\sum_{l, j = 1}^d 
\Bigg|
f'' \Bigg( X_{ \lceil \mycdot \rceil_h, T}^{X_{ \lfloor \mycdot \rfloor_h, \mycdot}^{Y_{ \lfloor \mycdot \rfloor_h}}} \Bigg) 
\Bigg( X_{ \lceil \mycdot \rceil_h, T}^{1, X_{ \lfloor \mycdot \rfloor_h, \mycdot}^{Y_{ \lfloor \mycdot \rfloor_h}}}, X_{ \lceil \mycdot \rceil_h, T}^{1, X_{ \lfloor \mycdot \rfloor_h, \mycdot}^{Y_{ \lfloor \mycdot \rfloor_h}}} \Bigg)
+ f' \Bigg( X_{ \lceil \mycdot \rceil_h, T}^{X_{ \lfloor \mycdot \rfloor_h, \mycdot}^{Y_{ \lfloor \mycdot \rfloor_h}}} \Bigg) 
X_{ \lceil \mycdot \rceil_h, T}^{2, X_{ \lfloor \mycdot \rfloor_h, \mycdot}^{Y_{ \lfloor \mycdot \rfloor_h}}}
\Bigg|
\big( e^{(d)}_l, e^{(d)}_j \big)
\Bigg \|_{L^2( \lambda_{[0, T]} \otimes \mathbb P; \R^k)}
\\&
\leq
\sup_{h \in \nicefrac{T}{ \N}}
\tfrac{d}{2} 
\Bigg(
\Big \|
\sigma \Big( \cdot, X_{ \lfloor \mycdot \rfloor _h, \mycdot }^{Y_{ \lfloor \mycdot \rfloor _h }} \Big)
\Big \|_{L^p( \lambda_{[0, T]} \otimes \mathbb P; \R^{d\times m})}^2
\\&
\quad
\cdot
\sum_{l,j \in \{1, \dots, d \}}
\Bigg \|
f'' \Bigg( X_{ \lceil \mycdot \rceil_h, T}^{X_{ \lfloor \mycdot \rfloor_h, \mycdot}^{Y_{ \lfloor \mycdot \rfloor_h}}} \Bigg) 
\Bigg(X_{ \lceil \mycdot \rceil_h, T}^{1, X_{ \lfloor \mycdot \rfloor_h, \mycdot}^{Y_{ \lfloor \mycdot \rfloor_h}}}, X_{ \lceil \mycdot \rceil_h, T}^{1, X_{ \lfloor \mycdot \rfloor_h, \mycdot}^{Y_{ \lfloor \mycdot \rfloor_h}}} \Bigg)
+ f' \Bigg( X_{ \lceil \mycdot \rceil_h, T}^{X_{ \lfloor \mycdot \rfloor_h, \mycdot}^{Y_{ \lfloor \mycdot \rfloor_h}}} \Bigg) 
X_{ \lceil \mycdot \rceil_h, T}^{2, X_{ \lfloor \mycdot \rfloor_h, \mycdot}^{Y_{ \lfloor \mycdot \rfloor_h}}}
\Bigg)
\big( e^{(d)}_l, e^{(d)}_j \big)
\Bigg \|_{L^{ \! \frac{2p}{p - 4}}( \lambda_{[0, T]} \otimes \mathbb P; \R^k)}
\Bigg)
\\&<\infty.
\end{split}     \end{equation}
Analogously, the fact that for all $C \in \R^{d \times m}$ 
it holds that 
$ \sum_{i, j =1}^d |(C C^*)_{i, j}| \leq d \| C \|_{ \operatorname{HS}( \R^m, \R^d)}^2$,
H\"older's inequality,
the assumption $B \in \mathcal L^p( \lambda_{[0, T]} \otimes \mathbb P; \R^{d\times m})$,
and
inequality~\eqref{1552}
yield that 
\begin{equation} \label{theconvergence65}
\begin{split}
& 
\sup_{h \in \nicefrac{T}{ \N}}
\tfrac{1}{2} 
\bigg \|
\sum_{l, j = 1}^d
\left(B_{\mycdot} [B_{\mycdot}]^* \right)_{l, j}
\Big( f''(X_{ \lceil \mycdot \rceil_h , T}^{Y_{\mycdot}}) \big( X_{ \lceil \mycdot \rceil_h , T}^{1, Y_{\mycdot}}, X_{ \lceil \mycdot \rceil_h , T}^{1, Y_{\mycdot}} \big)
+
f'(X_{ \lceil \mycdot \rceil_h , T}^{Y_{\mycdot}}) X_{ \lceil \mycdot \rceil_h , T}^{2, Y_{\mycdot}} \Big)
\big( e^{(d)}_l, e^{(d)}_j \big)
\bigg \|_{L^2( \lambda_{[0, T]} \otimes \mathbb P; \R^k)}
\\&
\leq
\frac{d}{2}
\|
B
\|_{L^p( \lambda_{[0, T]} \otimes \mathbb P; \R^{d\times m})}^2
\\&
\quad 
\cdot
\sum_{l,j \in \{1, \dots, d \}}
\sup_{h \in \nicefrac{T}{ \N}}
\bigg \|
\Big( f''(X_{ \lceil \mycdot \rceil_h , T}^{Y_{\mycdot}}) \big( X_{ \lceil \mycdot \rceil_h , T}^{1, Y_{\mycdot}}, X_{ \lceil \mycdot \rceil_h , T}^{1, Y_{\mycdot}} \big)
+
f'(X_{ \lceil \mycdot \rceil_h , T}^{Y_{\mycdot}}) X_{ \lceil \mycdot \rceil_h , T}^{2, Y_{\mycdot}} \Big)
\big( e^{(d)}_l, e^{(d)}_j \big)
\bigg \|_{L^{ \! \frac{2p}{p - 4}}( \lambda_{[0, T]} \otimes \mathbb P; \R^k)}
< \infty.
\end{split}
\end{equation}
Next Klenke~\cite[Corollary 6.21 and  Theorem 6.25]{Klenke2008} together with the uniform $L^2$-bounds in~\eqref{243},~\eqref{theconvergence64}, and~\eqref{theconvergence65},
continuity of $f'$ and of $f''$,
path continuity of $Y$ and of 
$ \Delta_T \times O \ni (s, t, x) \mapsto X_{s, t}^x \in O$, 
and 
$ \inf_{r \in [0, T]} \mathbb P(X_{r, r}^{Y_r} = Y_r) = 1$
imply that 
\begin{equation}  \begin{split} \label{gertr}
&\lim_{ \nicefrac{T}{ \N} \ni h \searrow 0}
\Bigg \|
\smallint_0^T f' \Bigg(X_{ \lceil r \rceil_h, T}^{X_{ \lfloor r \rfloor_h, r}^{Y_{ \lfloor r \rfloor_h}}} \Bigg) 
X_{ \lceil r \rceil_h, T}^{1, X_{ \lfloor r \rfloor_h, r}^{Y_{ \lfloor r \rfloor_h }}} \mu(r, X_{\lfloor r\rfloor_h,r}^{Y_{\lfloor r\rfloor_h}}) 
- 
f' \big(X_{ \lceil r \rceil_h , T}^{Y_r} \big)
X_{ \lceil r \rceil_h , T}^{1, Y_r} A_r \,dr
\\&
-
\smallint_0^T
f' \big(X_{r, T}^{Y_r} \big)
X_{r, T}^{1, Y_r}
\Big( \mu \Big(r, Y_r \Big) - A_r \Big) \,dr
\\&
+ 
\tfrac{1}{2} 
\sum_{l, j = 1}^d 
\smallint_0^T
\Big( \sigma \big(r, X_{ \lfloor r \rfloor_h, r}^{Y_{ \lfloor r \rfloor_h }} \big) \big[\sigma \big(r, X_{ \lfloor r \rfloor_h , r}^{Y_{ \lfloor r \rfloor_h}} \big)\big]^* \Big)_{l, j}
\\&\qquad\cdot
\Bigg( 
f'' \Bigg( X_{ \lceil r \rceil_h, T}^{X_{ \lfloor r \rfloor_h, r}^{Y_{ \lfloor r \rfloor_h}}} \Bigg) 
\Bigg( X_{ \lceil r \rceil_h, T}^{1, X_{ \lfloor r \rfloor_h, r}^{Y_{ \lfloor r \rfloor_h}}}, X_{ \lceil r \rceil_h, T}^{1, X_{ \lfloor r \rfloor_h, r}^{Y_{ \lfloor r \rfloor_h}}} \Bigg)
+ 
f' \Bigg( X_{ \lceil r \rceil_h, T}^{X_{ \lfloor r \rfloor_h, r}^{Y_{ \lfloor r \rfloor_h}}} \Bigg) 
X_{ \lceil r \rceil_h, T}^{2, X_{ \lfloor r \rfloor_h, r}^{Y_{ \lfloor r \rfloor_h}}}
\Bigg)
\big( e^{(d)}_l, e^{(d)}_j \big)
\,dr
\\&
- 
\tfrac{1}{2}
\sum_{l, j = 1}^d
\smallint_0^T
\left(B_r [B_r]^* \right)_{l, j}
\Big( f'' \big( X_{ \lceil r \rceil_h , T}^{Y_r} \big) \big( X_{ \lceil r \rceil_h , T}^{1, Y_r}, X_{ \lceil r \rceil_h , T}^{1, Y_r} \big)
+
f' \big( X_{ \lceil r \rceil_h , T}^{Y_r} \big) X_{ \lceil r \rceil_h , T}^{2, Y_r} \Big)
(e_l^{(d)}, e_j^{(d)})
\,dr
\\& 
- 
\tfrac{1}{2} 
\sum_{l, j=1}^d
\smallint_0^T
\left( \sigma(r, Y_r) [\sigma(r, Y_r)]^*
- 
B_r [B_r]^*
\right)_{l, j}
\Big(
f'' \big(X_{r, T}^{Y_r} \big)
\big(X_{r, T}^{1, Y_r}, X_{r, T}^{1, Y_r} \big)
+ 
f' \big(X_{r, T}^{Y_r} \big)
X_{r, T}^{2, Y_r}
\Big)
(e_l^{(d)}, e_j^{(d)})
\,dr
\Bigg \|_{L^1( \mathbb P; \R^k)} 
\\&
=0.
\end{split}     \end{equation}
Inequality~\eqref{442} implies that for all $x, y \in O$ it holds that 
\begin{equation} \label{fxbetrag}
\| f(x) - f(y) \|_{ \R^k}
\leq
\|f(x) \|_{ \R^k}
+
\| f(y) \|_{ \R^k}
\leq
c
(1 + \| x \|_{ \R^d})
(1 + \| x \|_{ \R^d})^q
+
c
(1 + \| y \|_{ \R^d})
(1 + \| y \|_{ \R^d})^q.
\end{equation}
This,
H\"older's inequality, 
the fact that $2q + 2<p$,
the fact that
$\mathbb{P}\Big(X_{0,T}^{Y_0}=X_{0,T}^{X_{0,0}^{Y_0}}\Big)=1
=\mathbb{P}\Big(Y_T=X_{T,T}^{X_{T,T}^{Y_T}}\Big)$,
and
inequality~\eqref{inthemoodforlove1}
show that 
\begin{align} \label{nochmehrquarbroetchen}
\nonumber 
&
\big \|
f \big(X_{0, T}^{Y_0} \big) - f \big(Y_T \big) 
\big \|_{L^2( \mathbb P; \R^k)}
\\&
\leq
c
\big \| 
\big ( 1 + \big \| X_{0, T}^{Y_0} \big \|_{ \R^d} \big )^{1+q}
\big \|_{L^2( \mathbb P; \R)}
+
c
\big \| 
\big( 1 + \| Y_T \|_{ \R^d} \big)^{1+q}
\big \|_{L^2( \mathbb P; \R)}
\\&
\nonumber
\leq
c
\big (
1 + \big \| X_{0, T}^{Y_0} \big \|_{L^{2q + 2}( \mathbb P; \R^d)}
\big )^{q + 1}
+ 
c
\big(
1 + \| Y_T \|_{L^{2q + 2}( \mathbb P; \R^d)}
\big)^{q + 1}
\\&
\leq
\sup_{ \substack{r, s, t \in [0, T] \\ r \leq s \leq t}}
2c
\Big(
1 + \left\| X_{t,T}^{X_{r,s}^{Y_r}} \right\|_{L^{p}( \mathbb P; \R^d)}
\Big)^{q + 1}
< \infty. 
\nonumber
\end{align}
Equation~\eqref{fastfertig} and inequalities~\eqref{nochmehrquarbroetchen},~\eqref{243},~\eqref{theconvergence64}, and~\eqref{theconvergence65} imply that there exists a constant $K \in [0, \infty)$ such that for all $h \in \nicefrac{T}{\N}$ it holds that 
\begin{equation} \label{quarkbroetchen}
\begin{split}
& 
\Bigg \|
\smallint_0^T 
f' \Bigg(X_{ \lceil r \rceil_h, T}^{X_{ \lfloor r \rfloor_h, r}^{Y_{ \lfloor r \rfloor_h}}} \Bigg) 
X_{ \lceil r \rceil_h, T}^{1, X_{ \lfloor r \rfloor_h, r}^{Y_{ \lfloor r \rfloor_h }}} 
\sigma \Big(r, X_{ \lfloor r \rfloor_h , r}^{Y_{ \lfloor r \rfloor_h}} \Big) 
- f' \Big( X_{ \lceil r \rceil_h, T}^{Y_r} \Big)
X_{ \lceil r \rceil_h , T}^{1, Y_r} B_r 
\, \delta
W_r^{\mathbb{F}_0} 
\Bigg \|_{L^2( \mathbb P; \R^k)}
\\&
\leq
\big \|
f \big(X_{0, T}^{Y_0} \big) - f \left(Y_T \right) 
\big \|_{L^2( \mathbb P; \R^k)}
+ 
\Big \|
\smallint_0^T f' \bigg(X_{ \lceil r \rceil_h, T}^{X_{ \lfloor r \rfloor_h, r}^{Y_{ \lfloor r \rfloor_h}}} \bigg) X_{ \lceil r \rceil_h, T}^{1, X_{ \lfloor r \rfloor_h, r}^{Y_{ \lfloor r \rfloor_h }}} \mu(r, X_{ \lfloor r \rfloor_h , r}^{Y_{ \lfloor r \rfloor_h}} ) 
- f' \big(X_{ \lceil r \rceil_h , T}^{Y_r} \big)
X_{ \lceil r \rceil_h, T}^{1, Y_r} A_r \,dr
\Big \|_{L^2( \mathbb P; \R^k)}
\\&
+ 
\Bigg \|
\tfrac{1}{2} \sum_{l, j=1}^d \smallint_0^T
\Big( \sigma \big(r, X_{ \lfloor r \rfloor_h, r}^{Y_{ \lfloor r \rfloor_h}} \big) \big[\sigma \big(r, X_{ \lfloor r \rfloor_h, r}^{Y_{ \lfloor r \rfloor_h}} \big)\big]^* \Big)_{l, j} 
\\& 
\quad \quad \quad \quad 
\cdot 
\Bigg(
f'' \Bigg(X_{ \lceil r \rceil_h, T}^{X_{ \lfloor r \rfloor_h , r}^{Y_{ \lfloor r \rfloor_h}}} \Bigg) 
\Bigg( X_{ \lceil r \rceil_h, T}^{1, X_{ \lfloor r \rfloor_h, r}^{Y_{ \lfloor r \rfloor_h}}}, X_{ \lceil r \rceil_h, T}^{1, X_{ \lfloor r \rfloor_h, r}^{Y_{ \lfloor r \rfloor_h}}} \Bigg) 
+ 
f' \Bigg( X_{ \lceil r \rceil_h, T}^{X_{ \lfloor r \rfloor_h, r}^{Y_{ \lfloor r \rfloor_h}}} \Bigg)
X_{ \lceil r \rceil_h, T}^{2, X_{ \lfloor r \rfloor_h, r}^{Y_{ \lfloor r \rfloor_h}}} 
\Bigg)
(e_l^{(d)}, e_j^{(d)})
\\&
\quad \quad \quad \quad 
- 
\big(B_r [B_r]^* \big)_{l, j}
\Bigg(
f'' \big(X_{ \lceil r \rceil_h , T}^{Y_r} \big)
\big(
X_{ \lceil r \rceil_h , T}^{1, Y_r}, 
X_{ \lceil r \rceil_h , T}^{1, Y_r}
\big)
+
f' \big(X_{ \lceil r \rceil_h , T}^{Y_r} \big)
X_{ \lceil r \rceil_h , T}^{2, Y_r}
\Bigg)
(e_l^{(d)}, e_j^{(d)})
\,dr
\Bigg \|_{L^2( \mathbb P; \R^k)}
< K.
\end{split}
\end{equation}
The fact that $Y$, $X$, $X^1$ are continuous random fields,
continuity of $f'$, and
the fact that $ \inf_{r \in [0, T]} \mathbb P ( X_{r, r}^{Y_r} = Y_r) = 1$
yield that for all $r\in[0,T]$ it holds $\mathbb{P}$-a.s.\ that
\begin{equation}  \begin{split}\label{eq:conv.prob}
&\lim_{ \nicefrac{T}{ \N} \ni h \searrow 0}
\left(f' \Bigg(X_{ \lceil r \rceil_h, T}^{X_{ \lfloor r \rfloor_h, r }^{Y_{ \lfloor r \rfloor_h}}} \Bigg) 
X_{ \lceil r \rceil_h, T}^{1, X_{ \lfloor r \rfloor_h, r }^{Y_{ \lfloor r \rfloor_h }}} 
\sigma \Big( r, X_{ \lfloor r \rfloor_h, r}^{Y_{ \lfloor r \rfloor_h }} \Big) 
- 
f' \Big( X_{ \lceil r \rceil_h, T}^{Y_ r } \Big)
X_{ \lceil r \rceil_h , T}^{1, Y_ r } B_ r 
\right)
\\&
=
f' \big(X_{ r , T}^{Y_ r } \big)X_{ r , T}^{1, Y r }
\Big( \sigma( r , Y_ r ) - B_ r \Big).
\end{split}     \end{equation}
This,
Fatou's lemma,
and
the inequalities~\eqref{245} and~\eqref{238}
yield that the sequence
\begin{equation}  \begin{split}\label{eq:sequence}
\left(
f' \Bigg(X_{ \lceil \mycdot \rceil_h, T}^{X_{ \lfloor \mycdot \rfloor_h, \mycdot }^{Y_{ \lfloor \mycdot \rfloor_h}}} \Bigg) 
X_{ \lceil \mycdot \rceil_h, T}^{1, X_{ \lfloor \mycdot \rfloor_h, \mycdot }^{Y_{ \lfloor \mycdot \rfloor_h }}} 
\sigma \Big( \cdot, X_{ \lfloor \mycdot \rfloor_h, \mycdot}^{Y_{ \lfloor \mycdot \rfloor_h }} \Big) 
- 
f' \Big( X_{ \lceil \mycdot \rceil_h, T}^{Y_{\mycdot} } \Big)
X_{ \lceil \mycdot \rceil_h , T}^{1, Y_{\mycdot} } B_{\mycdot} 
-
f' \big(X_{ \mycdot , T}^{Y_{\mycdot} } \big)X_{ \mycdot , T}^{1, Y_{\mycdot} }
\Big( \sigma( \cdot , Y_{\mycdot} ) - B_{\mycdot} \Big)
\right)_{ h\in \nicefrac{T}{ \N} }
\end{split}     \end{equation}
is bounded in $L^2(\lambda_{[0,T]}\otimes\mathbb{P};\R^{k\times m})$.
This, the fact that every bounded sequence in the separable Hilbert space
$L^2(\lambda_{[0,T]}\otimes\mathbb{P};\R^{k\times m})$
has a weakly converging subsequence (e.g., Kato~\cite[Lemma 5.1.4]{Kato1980}),
and the convergence~\eqref{eq:conv.prob} ensure that
the sequence~\eqref{eq:sequence} converges to $0$ in the weak topology of 
$ L^2( \lambda_{[0, T]} \otimes \mathbb P; \R^{k\times m})$
as $ \nicefrac{T}{ \N } \ni h \searrow 0$.
This, the fact that the processes
\begin{equation}  \begin{split}
\left(  f' \Big(X_{ \lceil r \rceil_h, T}^{X_{ \lfloor r \rfloor_h, r}^{Y_{ \lfloor r \rfloor_h}}} \Big) 
X_{ \lceil r \rceil_h, T}^{1, X_{ \lfloor r \rfloor_h, r}^{Y_{ \lfloor r \rfloor_h }}} 
\sigma \Big( r, X_{ \lfloor r \rfloor _h, r}^{Y_{ \lfloor r \rfloor _h }} \Big)
-
f' \big(X_{\lceil r\rceil_h, T}^{Y_r} \big) X_{\lceil r\rceil_h, T}^{1, Y_r} B_r
\right)_{r\in[0,T]},
\quad h\in T/\N,
\end{split}     \end{equation}
are
Skorohod-integrable,~\eqref{quarkbroetchen},
and Lemma~\ref{l:Skorohod.integral.closed} imply that the stochastic process
\begin{equation}  \begin{split}
\big(
f' \big(X_{r, T}^{Y_r} \big)X_{r, T}^{1, Y_r}
( \sigma(r, Y_r) - B_r)
\big)_{r \in [0, T]}
\end{split}     \end{equation}
is
Skorohod-integrable 
and that  for every $\mathbb{F}_T$/$\mathcal{B}([-1,1]^k)$-measurable
function $Z\colon \Omega\to[-1,1]^k$
it holds that
\begin{equation}  \begin{split} \label{drittekonvergenz}
&\lim_{\nicefrac{T}{\N}\ni h\searrow 0}
\E\Bigg[\Bigg\langle Z,
\smallint_0^T 
f' \Bigg( X_{ \lceil r \rceil_h, T}^{X_{ \lfloor r \rfloor_h, r}^{Y_{ \lfloor r \rfloor_h}}} \Bigg) 
X_{ \lceil r \rceil_h, T}^{1, X_{ \lfloor r \rfloor_h, r}^{Y_{ \lfloor r \rfloor_h }}} 
\sigma \Big(r, X_{ \lfloor r \rfloor_h, r}^{Y_{ \lfloor r \rfloor_h }} \Big) 
- 
f' \Big( X_{ \lceil r \rceil_h, T}^{Y_r} \Big)
X_{ \lceil r \rceil_h , T}^{1, Y_r} B_r
\,\delta
W_r^{\mathbb{F}_0}
\\&\qquad\qquad\qquad\quad
-\smallint_0^T f' \big( X_{r, T}^{Y_r} \big) X_{r, T}^{1, Y_r} \sigma(r, Y_r) - f'\big( X_{r, T}^{Y_r} \big) X_{r, T}^{1, Y_r} B_r
\,\delta
W_r^{\mathbb{F}_0}
\Bigg\rangle_{\R^k}\Bigg]=0.
\end{split}     \end{equation}
Equation~\eqref{fastfertig} and the convergences~\eqref{gertr} and~\eqref{drittekonvergenz} imply that
for every $\mathbb{F}_T$/$\mathcal{B}([-1,1]^k)$-measurable
function $Z\colon \Omega\to[-1,1]^k$
it holds that
\begin{equation}  \begin{split}
&\E\Bigg[\Bigg\langle Z,
\smallint_0^T
f' \big(X_{r, T}^{Y_r} \big)
X_{r, T}^{1, Y_r}
\Big( \mu(r, Y_r) - A_r \Big) \,dr
+ 
\smallint_0^T
f' \big(X_{r, T}^{Y_r} \big)
X_{r, T}^{1, Y_r}
\Big( \sigma(r, Y_r) - B_r \Big) 
\,\delta
W_r^{\mathbb{F}_0}
\\
&\quad 
+ 
\tfrac{1}{2} 
\sum_{l, j=1}^d
\smallint_0^T
\Big( \sigma(r, Y_r) [ \sigma(r, Y_r)]^*
- 
B_r [B_r]^*
\Big)_{l, j}
\Big(
f'' \big(X_{r, T}^{Y_r} \big)
\big(X_{r, T}^{1, Y_r}, X_{r, T}^{1, Y_r} \big)
+ 
f' \big(X_{r, T}^{Y_r} \big)
X_{r, T}^{2, Y_r}
\Big)
\big( e_l^{(d)}, e_j^{(d)} \big)
\,dr
\\
&\quad 
-f \big(X_{0, T}^{Y_0} \big)
+ 
f (Y_T )
\Bigg\rangle_{\R^k}\Bigg]=0.
\end{split}     \end{equation}
This implies equation~\eqref{eq:l.perturbation.formula}.
The proof of Theorem~\ref{thm:perturbation.formula} is thus completed.
\end{proof}

\section*{Appendix: The Skorohod integral with respect to Brownian motion and additional independent information}
\stepcounter{section}
\setcounter{section}{1}
\renewcommand{\thesection}{\Alph{section}}%

In this appendix we introduce the Skorohod integral with respect to a Brownian motion $W$
and an additional sigma-algebra $ \mathbb{F}_0$ which is independent of $W$. As a motivation, note that for
every  probability space $( \Omega, \mathcal{F}, \mathbb P)$
and
every standard Brownian motion
$W \colon [0, 3] \times \Omega \to \R$ 
the It\^{o} integrals
$ \smallint_0^1 \sin(W_s(W_2-W_1)) \,dW_s$
and
$ \smallint_1^2\sin(W_s(W_3-W_2)) \,dW_s$
are well-defined (however with respect to different filtrations)
but their sum cannot be written as It\^{o} integral 
$ \smallint_0^2 \sin( W_s(  W_{\lceil s \rceil_1 +1}-W_{\lceil s \rceil_1 }))  \,dW_s$
(which is not well-defined as It\^o integral).
In this appendix we provide sufficient results to rewrite It\^{o} integrals as Skorohod integrals and then to write the sum of these as a single Skorohod integral. 


\newcommand{\spaceH}{\R^d}
\newcommand{\spaceU}{\R^m}
\begin{setting}\label{setting4}
Let 
$d,m\in\N$,
let 
$S,T \in \R$ satisfy $S<T$,
let
$( \Omega, \mathcal{F}, \mathbb P)$
be a probability space, 
let
$W \colon [S, T] \times \Omega \rightarrow \spaceU$
be a stochastic process such that $(W_{S+t}-W_S)_{t\in[0,T-S]}$ is a standard
Brownian motion with continuous sample paths,
let $ \mathbb{F}_S \subseteq \mathcal{F}$ be a sigma-algebra which is independent of  $\mathfrak{S}(W_{t}-W_S\colon t\in[S,T])$,
let $\mathcal{N}=\{A\in\mathcal{F}\colon \mathbb{P}(A)=0\}$,
let $ \mathbb{F}_T \subseteq \mathcal{F}$ be the sigma-algebra which satisfies that
$\mathbb{F}_T=\mathfrak{S}(\mathbb{F}_S\cup\mathfrak{S}(W_t-W_S\colon t\in[S,T])\cup\mathcal N)$,
let 
$ \mathcal S( \mathbb P, \mathbb{F}_S, W; \spaceH ) \subseteq L^2 ( \mathbb P|_{\mathbb{F}_T}; \spaceH )$
be the subset with the property that 
\begin{equation}
\mathcal S( \mathbb P, \mathbb{F}_S, W; \spaceH )
=
\begin{Bmatrix}
&F \in L^2 ( \mathbb P|_{ \mathbb{F}_T}; \spaceH ) \colon
\exists n \in \N, 
\exists \phi_1, \dots, \phi_n \in \mathcal L^2( \lambda_{[S, T]}; \spaceU ), 
\\&
\exists f \in C_b^{ \! \infty, \mathfrak{S}( \mathbb{F}_S \cup \mathcal N)}( \R^n \times \Omega, \R), 
\exists h \in \spaceH 
\textnormal{ such that it holds } \mathbb P\textnormal{-a.s.\ that }
\\&
F= 
f \big( \smallint_S^T \phi_1(r)\,dW_r, \dots, \smallint_S^T \phi_n(r)\,dW_r \big) h 
\end{Bmatrix},
\end{equation}
and for all $s, t \in [S, T]$ satisfying that $s < t$ let 
$ \mathbb{F}_{[S, s] \cup [t, T]} \subseteq \mathcal{F}$ be the sigma-algebra with the property that 
$ \mathbb{F}_{[S, s] \cup [t, T]} = \mathfrak{S}( \mathbb{F}_S \cup \mathfrak{S}(W_r-W_S \colon r \in [S, s]) \cup \mathfrak{S}(W_r - W_t \colon r \in [t, T]) \cup \mathcal N)$.
\end{setting}

\begin{definition} \label{d:Malliavin.elementary}
Assume Setting~\ref{setting4}. 
The \textbf{extended Malliavin differential} operator
\begin{equation}
\mathcal D( \mathbb P, \mathbb{F}_S, W; \spaceH )
\colon
\mathbb D^{(1, 2)}( \mathbb P, \mathbb{F}_S, W; \spaceH )
\to
L^2( \mathbb P|_{ \mathbb{F}_T}; L^2( \lambda_{[S, T]}; \R^{d\times m}))
\end{equation}
is the closed linear operator with the property that for all 
$F \in\mathcal S( \mathbb P, \mathbb{F}_S, W; \spaceH )$ 
with the property that 
$ \exists n \in \N$, 
$ \exists \phi_1, \dots, \phi_n \in \mathcal L^2( \lambda_{[S, T]}; \spaceU)$, 
$\exists f \in C_b^{ \! \infty, \mathfrak{S}( \mathbb{F}_S \cup \mathcal N)}( \R^n \times \Omega, \R)$,
$ \exists h \in \spaceH $
such that it holds $ \mathbb P$-a.s.\ that
$F= f \big( \smallint_S^T \phi_1(r)\,dW_r , \dots, \smallint_S^T \phi_n(r)\,dW_r \big) h$
it holds $ \lambda_{[S,T]}\otimes \mathbb P$-a.e.\ that
\begin{equation}\label{eq:MD}
\mathcal D( \mathbb P, \mathbb{F}_S, W; \spaceH ) F 
= \sum_{i=1}^n \frac{ \partial f }{ \partial x_{i}} 
\Big( \smallint_S^T \phi_1(s)\,dW_s , \dots, \smallint_S^T \phi_n(s)\,dW_s \Big) 
\phi_i
h
\end{equation}
and where 
$ \mathbb D^{(1, 2)}( \mathbb P, \mathbb{F}_S, W; \spaceH )$
is the closure of 
$ \operatorname{span}( \mathcal S( \mathbb P, \mathbb{F}_S, W; \spaceH )) \subseteq L^2 ( \mathbb P|_{ \mathbb{F}_T}; \spaceH )$ 
with respect to the norm 
\begin{equation}
\| \cdot \|_{ \mathbb D^{(1, 2)}( \mathbb P, \mathbb{F}_S, W; \spaceH )} 
=
\Big( \mathbb E \Big[ \| \cdot \|^2_{\spaceH} + \| \mathcal D( \mathbb P, \mathbb{F}_S, W; \spaceH ) \mycdot \|^2_{L^2( \lambda_{[S, T]}; \spaceH )} \Big] \Big)^{ \! \tfrac{1}{2}}.
\end{equation}
We write 
$
\mathcal D
=
\mathcal D( \mathbb P, \mathfrak{S}(\mathcal{N}), W; \spaceH )
$
and denote $ \mathcal D$ as the \textbf{classical Malliavin derivative}. 
\end{definition}

The following lemma, Lemma~\ref{l:MD.well-defined}, shows that
the extended Malliavin derivative is well-defined (in particular, the left-hand side of~\eqref{eq:MD} does not depend on the representative
and such a closed linear operator exists).
The proof of Lemma~\ref{l:MD.well-defined} is almost literally identical to the proofs of Proposition 4.2 and Proposition 4.4 in Kruse~\cite{Kruse2014_PhD_Thesis} and therefore omitted.
\begin{lemma}\label{l:MD.well-defined}
Assume Setting~\ref{setting4}. 
Then the operator
\begin{equation}  \begin{split}
\mathcal D( \mathbb P, \mathbb{F}_S, W; \spaceH )
\colon
\mathbb D^{(1, 2)}( \mathbb P, \mathbb{F}_S, W; \spaceH )
\to
L^2( \mathbb P|_{ \mathbb{F}_T}; L^2( \lambda_{[S, T]}; \R^{d\times m}))
\end{split}     \end{equation}
is well-defined.
\end{lemma}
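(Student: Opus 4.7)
The plan is to verify two assertions that together guarantee well-definedness of the densely-defined operator: (a) the right-hand side of~\eqref{eq:MD} depends only on the random variable $F$ and not on the particular representation $F=f(I_1,\ldots,I_n)h$ where $I_i=\int_S^T\phi_i(r)\,dW_r$; and (b) the resulting map on $\operatorname{span}(\mathcal{S}(\mathbb P,\mathbb F_S,W;\R^d))$ with values in $L^2(\mathbb P|_{\mathbb F_T};L^2(\lambda_{[S,T]};\R^{d\times m}))$ is closable, so that the closure procedure used in Definition~\ref{d:Malliavin.elementary} to pass from $\operatorname{span}(\mathcal S)$ to $\mathbb D^{(1,2)}$ is legitimate.

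For (a), I would begin with the integrability of the right-hand side. Since $f$ has bounded partial derivatives, since $h\in\R^d$, and since each $\phi_i\in\mathcal L^2(\lambda_{[S,T]};\R^m)$, the product $\sum_i(\partial_i f)(I_1,\ldots,I_n)\phi_i h$ is manifestly in $L^2(\mathbb P|_{\mathbb F_T};L^2(\lambda_{[S,T]};\R^{d\times m}))$, using the $\mathfrak S(\mathbb F_S\cup\mathcal N)$-measurability of $f(x,\cdot)$ to see the $\mathbb F_T$-measurability. Representation-independence is then reduced to the classical situation via the independence of $\mathbb F_S$ from $\mathfrak S(W_r-W_S\colon r\in[S,T])$. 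More precisely, by the product structure one may work on a regular conditional probability given $\mathbb F_S$; for $\mathbb P|_{\mathbb F_S}$-a.e.\ $\omega$ the function $f(\cdot,\omega)$ is a deterministic element of $C^\infty_b(\R^n,\R)$ and the Wiener integrals are standard cylindrical functionals of the Brownian motion $(W_r-W_S)_{r\in[S,T]}$, to which the classical representation-independence result for the Malliavin derivative on cylindrical functionals (e.g.\ Nualart's monograph, Section~1.2) applies. Integrating out $\omega$ on $\mathbb F_S$ yields the result.

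For (b), the same conditioning argument delivers an extended integration-by-parts identity: for every $F\in\mathcal S(\mathbb P,\mathbb F_S,W;\R)$, every bounded $\mathbb F_S$-measurable $G\colon\Omega\to\R$, and every bounded $\phi\in\mathcal L^2(\lambda_{[S,T]};\R^m)$ one has
\begin{equation}
\mathbb E\!\left[G\langle\mathcal D(\mathbb P,\mathbb F_S,W;\R)F,\phi\rangle_{L^2([S,T];\R^m)}\right]
=\mathbb E\!\left[G\,F\smallint_S^T\phi(r)\,dW_r\right],
\end{equation}
obtained by conditioning on $\mathbb F_S$, invoking the classical integration-by-parts formula for Gaussian functionals pointwise in the $\mathbb F_S$-variable, and then taking expectations. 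Closability is now deduced in the usual manner: if $(F_n)_{n\in\N}\subseteq\operatorname{span}(\mathcal S)$ satisfies $F_n\to 0$ in $L^2(\mathbb P|_{\mathbb F_T};\R^d)$ and $\mathcal DF_n\to\eta$ in $L^2(\mathbb P|_{\mathbb F_T};L^2(\lambda_{[S,T]};\R^{d\times m}))$, then testing against tensors of the form $G\phi\otimes v$ with $G$ bounded $\mathbb F_T$-measurable, $\phi$ bounded deterministic, and $v\in\R^d$, and using the duality formula above together with a product rule on elementary factors, forces $\eta=0$ in $L^2(\mathbb P|_{\mathbb F_T};L^2(\lambda_{[S,T]};\R^{d\times m}))$.

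The principal obstacle will be making the \emph{conditioning on $\mathbb F_S$} step fully rigorous, in particular justifying that the pointwise application of the classical Malliavin calculus results to $f(\cdot,\omega)$ is measurable in $\omega$ so that one may take expectations. Because $\mathbb F_S$ and $\mathfrak S(W_r-W_S\colon r\in[S,T])$ are independent one can sidestep regular conditional distributions by working directly on a product probability space, and then the measurability issues are handled by standard monotone-class arguments — exactly as in the proofs of Proposition~4.2 and Proposition~4.4 in Kruse~\cite{Kruse2014_PhD_Thesis}, which the present statement is designed to reproduce verbatim in the slightly more general notation used here.
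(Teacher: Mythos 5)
Your proposal is correct and follows essentially the same route the paper intends: the paper omits its proof, deferring to Kruse~\cite{Kruse2014_PhD_Thesis} (Propositions~4.2 and~4.4), and you reconstruct precisely that argument — representation-independence plus closability of the densely-defined operator, both obtained by conditioning on $\mathbb F_S$ and reducing to the classical cylindrical case — acknowledging the reduction to Kruse explicitly at the end. One small imprecision worth flagging: in the closability step you should test against $G\phi\otimes v$ with $G\in\operatorname{span}(\mathcal S(\mathbb P,\mathbb F_S,W;\R))$ rather than with $G$ merely bounded $\mathbb F_T$-measurable, since the product rule and duality formula you invoke require $G$ to be Malliavin-differentiable; density of such $G$ in $L^2(\mathbb P|_{\mathbb F_T};\R)$, guaranteed by Lemma~\ref{l:suff.rich}, then yields $\eta=0$.
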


The following lemma, Lemma~\ref{l:suff.rich}, shows that
the set $\mathcal S( \mathbb P, \mathbb{F}_S, W;  \spaceH ) $ is sufficiently rich.
The proof of Lemma~\ref{l:suff.rich} is standard and therefore omitted.
\begin{lemma}\label{l:suff.rich}
Assume Setting~\ref{setting4}. 
Then $\operatorname{span} \big( \mathcal S( \mathbb P, \mathbb{F}_S, W; \spaceH ) \big)$ is dense in $L^2( \mathbb P|_{ \mathbb{F}_T}; \spaceH )$.
\end{lemma}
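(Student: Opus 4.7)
The plan is to reduce to the classical density theorem for smooth cylindrical functionals of Brownian motion, and then to absorb the $\mathbb{F}_S$-measurable piece using the independence of $\mathbb{F}_S$ from the Brownian filtration. The argument will proceed in three steps.

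First, I would reduce to the case $\spaceH=\R$. Because $\spaceH=\R^d$ is finite-dimensional with canonical basis $e_1^{(d)},\dots,e_d^{(d)}$, every $F\in L^2(\mathbb{P}|_{\mathbb{F}_T};\R^d)$ decomposes as $F=\sum_{i=1}^d F_i\,e_i^{(d)}$ with scalar components $F_i\in L^2(\mathbb{P}|_{\mathbb{F}_T};\R)$. A scalar approximation $F_i\approx G_i\in\operatorname{span}(\mathcal{S}(\mathbb{P},\mathbb{F}_S,W;\R))$ then tensors against $e_i^{(d)}$ (which corresponds to the choice $h=e_i^{(d)}$ in the definition of $\mathcal{S}$) to produce an element of $\operatorname{span}(\mathcal{S}(\mathbb{P},\mathbb{F}_S,W;\R^d))$.

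Second, for the scalar case I would exploit that the collection
\begin{equation}
  \mathcal{C}=\big\{A\cap B\colon A\in\mathbb{F}_S,\; B\in\mathfrak{S}(W_r-W_S\colon r\in[S,T])\big\}
\end{equation}
is a $\pi$-system whose generated $\sigma$-algebra together with $\mathcal{N}$ equals $\mathbb{F}_T$. By a standard monotone class and simple-function argument, finite linear combinations of indicators $\mathbbm{1}_{A\cap B}=\mathbbm{1}_A\mathbbm{1}_B$ are dense in $L^2(\mathbb{P}|_{\mathbb{F}_T};\R)$, so it suffices to approximate each such product.

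Third, by the classical density theorem for smooth cylindrical functionals of Brownian motion (cf., e.g., Nualart's monograph), $\mathbbm{1}_B$ can be approximated in $L^2(\mathbb{P}|_{\mathfrak{S}(W_r-W_S:r\in[S,T])\cup\mathcal{N}};\R)$ by functionals of the form $\tilde f\big(\smallint_S^T\phi_1(r)\,dW_r,\dots,\smallint_S^T\phi_n(r)\,dW_r\big)$ with $\tilde f\in C_b^\infty(\R^n,\R)$ and $\phi_i\in\mathcal{L}^2(\lambda_{[S,T]};\spaceU)$. Defining $f(x,\omega):=\mathbbm{1}_A(\omega)\,\tilde f(x)$, for every fixed $\omega$ the slice $f(\cdot,\omega)$ is either $\tilde f$ or $0$, hence lies in $C_b^\infty(\R^n,\R)$, and for every fixed $x$ the slice $f(x,\cdot)$ is $\mathfrak{S}(\mathbb{F}_S\cup\mathcal{N})$-measurable; thus $f\in C_b^{\!\infty,\mathfrak{S}(\mathbb{F}_S\cup\mathcal{N})}(\R^n\times\Omega,\R)$. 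Independence of $\mathbb{F}_S$ and $\mathfrak{S}(W_r-W_S\colon r\in[S,T])$ yields that for any square-integrable $G$ measurable with respect to the Brownian $\sigma$-algebra,
\begin{equation}
  \mathbb{E}\big[(\mathbbm{1}_A\,G)^2\big]=\mathbb{P}(A)\,\mathbb{E}[G^2],
\end{equation}
so multiplication by $\mathbbm{1}_A$ preserves $L^2$-approximation and the error in approximating $\mathbbm{1}_A\mathbbm{1}_B$ by the element $f\big(\smallint_S^T\phi_1\,dW,\dots,\smallint_S^T\phi_n\,dW\big)\cdot 1\in\mathcal{S}(\mathbb{P},\mathbb{F}_S,W;\R)$ becomes arbitrarily small.

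The principal technical point, which is really only a bookkeeping matter, is checking that the $\omega$-dependent symbol $f(x,\omega)=\mathbbm{1}_A(\omega)\tilde f(x)$ genuinely lies in the class $C_b^{\!\infty,\mathfrak{S}(\mathbb{F}_S\cup\mathcal{N})}(\R^n\times\Omega,\R)$ required by the definition of $\mathcal{S}(\mathbb{P},\mathbb{F}_S,W;\R)$; this is essentially automatic since $\mathbbm{1}_A$ is bounded and smoothness in $x$ is unaffected by multiplication by an $\omega$-only factor.
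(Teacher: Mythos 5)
Your proposal is correct, and it is precisely the standard argument that the paper alludes to when it remarks that the proof is ``standard and therefore omitted.'' The three-step structure is the natural one: (a) reduction to $\spaceH=\R$ by expanding in the basis $e_1^{(d)},\dots,e_d^{(d)}$ and noting that the vector $h$ appearing in the definition of $\mathcal S(\mathbb P,\mathbb{F}_S,W;\spaceH)$ absorbs the basis direction; (b) the $\pi$-system/Dynkin argument that makes the closed span of indicators $\mathbbm 1_{A\cap B}$ ($A\in\mathbb{F}_S$, $B$ in the Brownian sigma-algebra) exhaust $L^2(\mathbb P|_{\mathbb{F}_T};\R)$; (c) the classical density of $C^\infty_b$-cylindrical functionals of Wiener integrals, tensored with the $\mathbb{F}_S$-measurable factor $\mathbbm 1_A$, using independence to control the $L^2$ error. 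Your verification that $f(x,\omega)=\mathbbm 1_A(\omega)\tilde f(x)$ belongs to $C_b^{\!\infty,\mathfrak{S}(\mathbb{F}_S\cup\mathcal N)}(\R^n\times\Omega,\R)$ is the one bookkeeping step worth making explicit, and you handle it correctly: each $\omega$-slice is $\tilde f$ or $0$, hence in $C^\infty_b$, and each $x$-slice is a constant multiple of $\mathbbm 1_A$, hence $\mathfrak{S}(\mathbb{F}_S\cup\mathcal N)$-measurable. The independence identity $\mathbb E[(\mathbbm 1_A G)^2]=\mathbb P(A)\,\mathbb E[G^2]$ for Brownian-measurable $G$ then gives the required error estimate. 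No gaps.
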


In particular,  Lemma~\ref{l:suff.rich} implies that the extended Malliavin differential operator is densely defined.
Next we introduce the adjoint of the densely defined extended Malliavin differential operator.
\begin{definition} \label{d:integral}
Assume Setting~\ref{setting4}. 
The \textbf{extended Skorohod integral} is the linear operator 
\begin{equation}
\delta ( \mathbb P, \mathbb{F}_S, W; \spaceH )
\colon 
\operatorname{Dom}_{ \delta}( \mathbb P, \mathbb{F}_S, W; \spaceH )
\to 
L^2( \mathbb P|_{ \mathbb{F}_T}; \spaceH )
\end{equation}
which satisfies that
$X \in L^2( \mathbb P|_{ \mathbb{F}_T}; L^2( \lambda_{[S, T]}; \R^{d\times m}))$
is in the domain 
$ \operatorname{Dom}_{ \delta}( \mathbb P, \mathbb{F}_S, W; \spaceH )$
if and only if there exists a
$c \in [0, \infty)$
with the property that for all 
$F \in \operatorname{span} \big(\mathcal S ( \mathbb P, \mathbb{F}_S, W; \spaceH )\big)$ 
it holds that 
\begin{equation}
\mathbb E [ \langle \mathcal D( \mathbb P, \mathbb{F}_S, W; \spaceH ) F, X \rangle_{ L^2( \lambda_{[S, T]}; \R^{d\times m})}]
\leq 
c \|F \|_{L^2( \mathbb P; \spaceH )}
\end{equation}
and which satisfies that
for all 
$X \in \operatorname{Dom}_{ \delta}( \mathbb P, \mathbb{F}_S, W; \spaceH )$,
$F \in \mathcal S ( \mathbb P, \mathbb{F}_S, W; \spaceH )$ 
it holds that
\begin{equation} \label{eq:characterization.Skorohod}
\mathbb E \Big[ \Big \langle F,  \delta ( \mathbb P, \mathbb{F}_S, W; \spaceH )(X) \Big \rangle_{\spaceH} \Big]
= 
\mathbb E \Big[ \Big \langle \mathcal D ( \mathbb P, \mathbb{F}_S, W; \spaceH ) F, X \Big \rangle_{L^2( \lambda_{[S, T]}; \R^{d\times m})} \Big].
\end{equation}
We say that $X$ is $(\mathbb{P},\mathbb{F}_S,W;\spaceH)$\textbf{-Skorohod-integrable}
if and only if $X\in
 \operatorname{Dom}_{ \delta}( \mathbb P, \mathbb{F}_S, W; \spaceH )$.
For all $X\in \operatorname{Dom}_{ \delta}( \mathbb P, \mathbb{F}_S, W; \spaceH )$
we denote by
$\smallint_S^T X_r \,\delta W_r^{\mathbb{F}_S}$ the equivalence class satisfying that
\begin{equation}  \begin{split}
  \smallint_S^T X_r \,\delta W_r^{\mathbb{F}_S}
  =
  \delta ( \mathbb P, \mathbb{F}_S, W; \spaceH )(X).
\end{split}     \end{equation}
For all $X\in \operatorname{Dom}_{ \delta}( \mathbb P, \mathfrak{S}(\mathcal{N}), W; \spaceH )$
we denote by
$\smallint_S^T X_r \,\delta W_r$ the equivalence class satisfying that
\begin{equation}  \begin{split}
  \smallint_S^T X_r \,\delta W_r
  =
  \smallint_S^T X_r \,\delta W_r^{\mathfrak{S}(\mathcal{N})}
\end{split}     \end{equation}
and we refer to
$\smallint_S^T X_r \,\delta W_r$ as the \textbf{classical Skorohod integral}. 
\end{definition}

The following lemma will be applied in the proof of Proposition~\ref{l:Skorohod.on.T}.
\begin{lemma} \label{l:dependence.Malliavin.on.filtration}
Assume Setting~\ref{setting4} and let $s, t \in [S, T]$ satisfy that $s < t$.
Then
\begin{equation} \label{eq:inclusions.mathbbD}
\mathbb{D}^{(1, 2)}( \mathbb P, \mathbb{F}_S, W; \spaceH )
\subseteq
\mathbb{D}^{(1, 2)}( \mathbb P, \mathbb{F}_{[S, s] \cup [t, T]}, W|_{[s, t]\times\Omega}; \spaceH )
\end{equation}
and for all $F \in \mathbb{D}^{(1, 2)}( \mathbb P, \mathbb{F}_S, W; \spaceH )$ it holds $\lambda_{[s,t]}\otimes\mathbb{P}$-a.e.\ that
\begin{equation} \label{eq:inclusions.mathbbD.equality2}
\begin{split} 
\Big( \mathcal D( \mathbb P, \mathbb{F}_S, W; \spaceH ) F \Big) \Big |_{[s, t]\times\Omega}
=
\mathcal D( \mathbb P, \mathbb{F}_{[S, s] \cup [t, T]}, W|_{[s, t]\times\Omega}; \spaceH )F.
\end{split} 
\end{equation}
\end{lemma}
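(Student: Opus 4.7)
My plan is to verify the two assertions~\eqref{eq:inclusions.mathbbD}--\eqref{eq:inclusions.mathbbD.equality2} first on the elementary set $\mathcal{S}(\mathbb P,\mathbb{F}_S,W;\R^d)$ by splitting each defining stochastic integral over $[S,T]$ as an integral over $[s,t]$ plus $\mathbb{F}_{[S,s]\cup[t,T]}$-measurable boundary contributions, and then to transfer the conclusion to the whole domain $\mathbb{D}^{(1,2)}(\mathbb P,\mathbb{F}_S,W;\R^d)$ by invoking the closedness of the target operator.

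For the elementary step, take $F=f\bigl(\smallint_S^T\phi_1\,dW_r,\ldots,\smallint_S^T\phi_n\,dW_r\bigr)h\in\mathcal{S}(\mathbb P,\mathbb{F}_S,W;\R^d)$ and write, for every $i\in\{1,\ldots,n\}$, $\smallint_S^T\phi_i\,dW_r=\smallint_S^s\phi_i\,dW_r+\smallint_s^t\phi_i\,dW_r+\smallint_t^T\phi_i\,dW_r$. The first and third summands are $\mathbb{F}_{[S,s]\cup[t,T]}$-measurable, so I would define
\begin{equation}
\tilde f(x,\omega)=f\bigl(x_1+\smallint_S^s\phi_1\,dW_r(\omega)+\smallint_t^T\phi_1\,dW_r(\omega),\ldots,x_n+\smallint_S^s\phi_n\,dW_r(\omega)+\smallint_t^T\phi_n\,dW_r(\omega),\omega\bigr)
\end{equation}
and check that $\tilde f\in C_b^{\infty,\mathbb{F}_{[S,s]\cup[t,T]}}(\R^n\times\Omega,\R)$. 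This exhibits $F=\tilde f\bigl(\smallint_s^t\phi_1|_{[s,t]}\,dW_r,\ldots,\smallint_s^t\phi_n|_{[s,t]}\,dW_r\bigr)h$ as an element of $\mathcal{S}(\mathbb P,\mathbb{F}_{[S,s]\cup[t,T]},W|_{[s,t]\times\Omega};\R^d)$. Since $\tfrac{\partial\tilde f}{\partial x_i}(x,\omega)=\tfrac{\partial f}{\partial x_i}(x+\mathrm{shift}(\omega),\omega)$, applying the defining formula~\eqref{eq:MD} to both operators and evaluating at the appropriate arguments immediately yields~\eqref{eq:inclusions.mathbbD.equality2} for this $F$, and linearity propagates it to $\operatorname{span}\bigl(\mathcal{S}(\mathbb P,\mathbb{F}_S,W;\R^d)\bigr)$.

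For the closure step, fix $F\in\mathbb{D}^{(1,2)}(\mathbb P,\mathbb{F}_S,W;\R^d)$ and choose a sequence $F_k\in\operatorname{span}(\mathcal{S}(\mathbb P,\mathbb{F}_S,W;\R^d))$ with $F_k\to F$ and $\mathcal{D}(\mathbb P,\mathbb{F}_S,W;\R^d)F_k\to\mathcal{D}(\mathbb P,\mathbb{F}_S,W;\R^d)F$ in the respective $L^2$-norms. The first step places each $F_k$ in $\operatorname{span}(\mathcal{S}(\mathbb P,\mathbb{F}_{[S,s]\cup[t,T]},W|_{[s,t]\times\Omega};\R^d))$ and identifies its $(\mathbb{F}_{[S,s]\cup[t,T]})$-Malliavin derivative with the restriction to $[s,t]$ of its $(\mathbb{F}_S)$-Malliavin derivative. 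Continuity of the restriction map $L^2(\mathbb P|_{\mathbb{F}_T};L^2(\lambda_{[S,T]};\R^{d\times m}))\to L^2(\mathbb P|_{\mathbb{F}_T};L^2(\lambda_{[s,t]};\R^{d\times m}))$ turns the $L^2$-convergence of the $[S,T]$-derivatives into $L^2$-convergence of the $[s,t]$-restrictions, and closedness of $\mathcal{D}(\mathbb P,\mathbb{F}_{[S,s]\cup[t,T]},W|_{[s,t]\times\Omega};\R^d)$ (built into Definition~\ref{d:Malliavin.elementary} and guaranteed by Lemma~\ref{l:MD.well-defined}) then gives both the inclusion~\eqref{eq:inclusions.mathbbD} and the equality~\eqref{eq:inclusions.mathbbD.equality2}.

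The principal technical obstacle I anticipate is the verification that $\tilde f$ genuinely belongs to the required smooth class $C_b^{\infty,\mathbb{F}_{[S,s]\cup[t,T]}}$, which requires the random shifts $\smallint_S^s\phi_i\,dW_r$ and $\smallint_t^T\phi_i\,dW_r$ to be jointly measurable in the right way and the composition to preserve bounded derivatives of all orders. Once this measurability bookkeeping is set up, everything else is routine: the joint-measurability of the shifts follows from the defining generators of $\mathbb{F}_{[S,s]\cup[t,T]}$, and the boundedness of derivatives of $\tilde f$ is inherited directly from those of $f$ because the shift only depends on $\omega$ and not on $x$.
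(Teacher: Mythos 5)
Your proof is correct and follows essentially the same route as the paper: decompose each Wiener integral $\smallint_S^T\phi_i\,dW_r$ into $\smallint_S^s+\smallint_s^t+\smallint_t^T$, absorb the $\mathbb{F}_{[S,s]\cup[t,T]}$-measurable pieces into a shifted coefficient function $\tilde f$ (the paper calls it $g$) so that $F$ is exhibited as an element of $\mathcal S(\mathbb P,\mathbb{F}_{[S,s]\cup[t,T]},W|_{[s,t]\times\Omega};\R^d)$, verify the derivative identity on $[s,t]$, and then pass to the closure; the paper phrases the closure step via the norm inequality $\|F\|_{\mathbb{D}^{(1,2)}(\mathbb P,\mathbb{F}_{[S,s]\cup[t,T]},W|_{[s,t]\times\Omega};\R^d)}\leq\|F\|_{\mathbb{D}^{(1,2)}(\mathbb P,\mathbb{F}_S,W;\R^d)}$ on the span, while you phrase it via an approximating sequence and closedness of the target operator, but these are equivalent formulations of the same density argument.
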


\begin{proof}[Proof of Lemma~\ref{l:dependence.Malliavin.on.filtration}.]
Throughout this proof let 
$F \in \mathcal S( \mathbb P, \mathbb{F}_S, W; \spaceH )$,
let
$n \in \N$, 
$ \phi_1, \dots, \phi_n \in \mathcal L^2( \lambda_{[S, T]}; \spaceU)$, 
$f \in C_b^{ \! \infty, \mathfrak{S}( \mathbb{F}_S \cup \mathcal N)}( \R^n \times \Omega, \R)$,
and
$h \in \spaceH $
satisfy that it holds $ \mathbb P$-a.s.\ that
\begin{equation}
F= f \Big( \smallint_S^T \phi_1(r) \,dW_r , \dots, \smallint_S^T \phi_n (r) \,dW_r \Big) h,
\end{equation}
and let 
$g \in C_b^{ \! \infty, \mathbb{F}_{[S, s] \cup [t, T]}}( \R^n \times \Omega, \R)$
be a function such that for all 
$(x_1, \dots, x_n) \in \R^n$
it holds $ \mathbb P$-a.s.\ that
\begin{equation}
g(x_1, \dots, x_n)
=
f \Big(x_1 + \smallint_S^s \phi_1(r) \,dW_r + \smallint_t^T \phi_1(r) \,dW_r, \dots, x_n + \smallint_S^s \phi_n(r) \,dW_r + \smallint_t^T \phi_n(r) \,dW_r \Big).
\end{equation}
Then it holds $ \mathbb P$-a.s.\ that 
\begin{equation}
F= g \Big( \smallint_s^t \phi_1 (r) \,dW_r , \dots, \smallint_s^t \phi_n (r) \,dW_r \Big) h. 
\end{equation}
This implies that $F\in \mathcal S( \mathbb P, \mathbb{F}_{[S, s] \cup [t, T]}, W|_{[s, t]\times\Omega}; \spaceH )$.
Next for all 
$i \in \{1, \dots, n \}$
it holds $ \mathbb P$-a.s.\ that
\begin{equation}
\frac{ \partial f }{ \partial x_{i}} 
\Big( \smallint_S^T \phi_1(r) \,dW_r , \dots, \smallint_S^T \phi_n(r) \,dW_r \Big) 
=
\frac{ \partial g }{ \partial x_{i}} 
\Big( \smallint_s^t \phi_1 (r) \,dW_r , \dots, \smallint_s^t \phi_n (r) \,dW_r \Big).
\end{equation}
It follows that it holds $ \lambda_{[s,t]}\otimes\mathbb P$-a.e.\ that 
\begin{equation} \label{2681}
\begin{split}
&\Big( \mathcal D( \mathbb P, \mathbb{F}_S, W; \spaceH ) F \Big) \Big |_{[s, t]\times\Omega} 
\\&
=
\sum_{i=1}^n \frac{ \partial f }{ \partial x_{i}} 
\Big( \smallint_S^T \phi_1(r) \,dW_r , \dots, \smallint_S^T \phi_n(r) \,dW_r \Big) 
\big( \phi_i |_{[s, t]} \big) 
h
=
\sum_{i=1}^n \frac{ \partial g }{ \partial x_{i}} 
\Big( \smallint_s^t \phi_1 (r) \,dW_r , \dots, \smallint_s^t \phi_n (r) \,dW_r \Big) 
\big( \phi_i |_{[s, t]} \big) 
h
\\&=
\mathcal D( \mathbb P, \mathbb{F}_{[S, s] \cup [t, T]}, W|_{[s, t]\times\Omega}; \spaceH )F.
\end{split}
\end{equation}
Equation~\eqref{2681} implies that 
\begin{equation} \label{blutwurst}
\begin{split}
&\| F \|_{ \mathbb{D}^{(1, 2)}( \mathbb P, \mathbb{F}_{[S, s] \cup [t, T]}, W|_{[s, t]\times\Omega}; \spaceH )}^2
\\&=
\mathbb E \Big[ \| F \|^2_{\spaceH} + \| \mathcal D( \mathbb P, \mathbb{F}_{[S, s] \cup [t, T]}, W|_{[s, t]\times\Omega}; \spaceH )F \|^2_{L^2( \lambda_{[s, t]}; \spaceH )} \Big] 
\\&=
\mathbb E \Big[ \| F \|^2_{\spaceH} + \Big\| \Big( \mathcal D( \mathbb P, \mathbb{F}_S, W; \spaceH ) F \Big) \Big |_{[s, t]\times\Omega} \Big\|^2_{L^2( \lambda_{[s, t]}; \spaceH )} \Big] 
\\&\leq
\mathbb E \big[ \| F \|^2_{\spaceH} + \| \mathcal D( \mathbb P, \mathbb{F}_S, W; \spaceH )F \|^2_{L^2( \lambda_{[S, T]}; \spaceH )} \big]
=
\| F \|_{ \mathbb{D}^{(1, 2)}( \mathbb P, \mathbb{F}_S, W; \spaceH )}^2.
\end{split}
\end{equation}
Since 
$F \in \mathcal S( \mathbb P, \mathbb{F}_S, W; \spaceH )$
was chosen arbitrarily it follows that
\begin{equation}
\operatorname{span}( \mathcal S( \mathbb P, \mathbb{F}_S, W; \spaceH ))
\subseteq
\operatorname{span}( \mathcal S( \mathbb P, \mathbb{F}_{[S, s] \cup [t, T]}, W|_{[s, t]\times\Omega}; \spaceH )).
\end{equation}
This and inequality~\eqref{blutwurst} yield the inclusion~\eqref{eq:inclusions.mathbbD}, and
equation~\eqref{2681} implies
equation~\eqref{eq:inclusions.mathbbD.equality2}.
The proof of Lemma~\ref{l:dependence.Malliavin.on.filtration} is thus completed.
\end{proof}

The following result, Proposition~\ref{l:Skorohod.on.T}, shows how to change the domain of integration for Skorohod integrals. 
\begin{prop} \label{l:Skorohod.on.T}
Assume Setting~\ref{setting4},
let
$X \in L^0( \mathbb P; L^2( \lambda_{[S, T]}; \R^{d\times m}))$,
and
let $s, t \in [S, T]$ satisfy that $s < t$.
Then the following two statements are equivalent:
\begin{enumerate}[(i)]
  \item  
  It holds that
$X|_{[s, t]\times\Omega}$ is 
$( \mathbb P, \mathbb{F}_{[S,s]\cup[t,T]}, W|_{[s, t]\times\Omega}; \spaceH )$-Skorohod-integrable.
\item
It holds that
$\1_{[s, t]} X$ is $( \mathbb P, \mathbb{F}_S, W; \spaceH )$-Skorohod-integrable.
\end{enumerate}
If any of these two statements is true, then it holds
$ \mathbb P$-a.s.\ that 
\begin{equation} \label{geor}
\smallint_s^t X_r \,\delta W_r^{\mathbb{F}_{[S,s]\cup[t,T]}}
= 
\smallint_S^T \1_{[s, t]} (r) X _r \,\delta W_r^{\mathbb{F}_{S}}.
\end{equation}
\end{prop}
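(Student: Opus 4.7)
The plan is to exploit the duality characterization~\eqref{eq:characterization.Skorohod} of the Skorohod integral together with the compatibility of Malliavin derivatives established in Lemma~\ref{l:dependence.Malliavin.on.filtration}. A key preliminary observation is the inclusion
\begin{equation}
\mathcal{S}(\mathbb{P}, \mathbb{F}_S, W; \spaceH) \subseteq \mathcal{S}(\mathbb{P}, \mathbb{F}_{[S,s] \cup [t,T]}, W|_{[s,t] \times \Omega}; \spaceH).
\end{equation}
Indeed, any $F = f(\smallint_S^T \phi_1 \, dW, \ldots, \smallint_S^T \phi_n \, dW)\, h$ with $f \in C_b^{\infty, \mathfrak{S}(\mathbb{F}_S \cup \mathcal{N})}$ can be rewritten by splitting each Wiener integral into its three pieces over $[S,s]$, $[s,t]$, and $[t,T]$ and absorbing the two outer pieces into a new function $g \in C_b^{\infty, \mathbb{F}_{[S,s] \cup [t,T]}}$ that is still smooth and bounded in its real arguments. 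For such $F$, Lemma~\ref{l:dependence.Malliavin.on.filtration} identifies the two Malliavin derivatives on $[s,t] \times \Omega$.

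For the direction (i)~$\Rightarrow$~(ii), I would take an arbitrary $F \in \mathcal{S}(\mathbb{P}, \mathbb{F}_S, W; \spaceH)$, apply~\eqref{eq:characterization.Skorohod} to $\delta(\mathbb{P}, \mathbb{F}_{[S,s] \cup [t,T]}, W|_{[s,t] \times \Omega}; \spaceH)$ via the above inclusion, and invoke Lemma~\ref{l:dependence.Malliavin.on.filtration} to obtain
\begin{equation}
\begin{split}
\E\Big[\big\langle \mathcal{D}(\mathbb{P}, \mathbb{F}_S, W; \spaceH) F, \1_{[s,t]} X \big\rangle_{L^2(\lambda_{[S,T]}; \R^{d \times m})}\Big]
= \E\Big[\Big\langle F, \smallint_s^t X_r \, \delta W_r^{\mathbb{F}_{[S,s] \cup [t,T]}} \Big\rangle_{\spaceH}\Big].
\end{split}
\end{equation}
The Cauchy--Schwarz inequality then bounds the right-hand side by $\|F\|_{L^2(\mathbb{P}; \spaceH)} \cdot \big\|\smallint_s^t X_r \, \delta W_r^{\mathbb{F}_{[S,s] \cup [t,T]}}\big\|_{L^2(\mathbb{P}; \spaceH)}$, which by Definition~\ref{d:integral} shows that $\1_{[s,t]} X$ is $(\mathbb{P}, \mathbb{F}_S, W; \spaceH)$-Skorohod integrable. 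Combining the same identity with the density of $\operatorname{span}(\mathcal{S}(\mathbb{P}, \mathbb{F}_S, W; \spaceH))$ in $L^2(\mathbb{P}|_{\mathbb{F}_T}; \spaceH)$ from Lemma~\ref{l:suff.rich} then establishes~\eqref{geor}.

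For the direction (ii)~$\Rightarrow$~(i), the complication is that a generic test functional $F \in \mathcal{S}(\mathbb{P}, \mathbb{F}_{[S,s] \cup [t,T]}, W|_{[s,t] \times \Omega}; \spaceH)$ carries $\omega$-dependence through the whole sigma-algebra $\mathbb{F}_{[S,s] \cup [t,T]}$, which cannot be expressed directly via finitely many Wiener integrals inside $\mathcal{S}(\mathbb{P}, \mathbb{F}_S, W; \spaceH)$. I would approximate $F$ in the norm of $\mathbb{D}^{(1,2)}(\mathbb{P}, \mathbb{F}_{[S,s] \cup [t,T]}, W|_{[s,t] \times \Omega}; \spaceH)$ by a sequence $F_k$ in which the $\omega$-dependence is captured by smooth cylindrical functions of finitely many Wiener integrals over $[S,s]$ and $[t,T]$; after absorbing these integrals into the smooth function, each $F_k$ sits in $\operatorname{span}(\mathcal{S}(\mathbb{P}, \mathbb{F}_S, W; \spaceH))$. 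Applying assumption (ii) and Lemma~\ref{l:dependence.Malliavin.on.filtration} to each $F_k$ and then passing to the limit would yield the bound required by Definition~\ref{d:integral} for $\delta(\mathbb{P}, \mathbb{F}_{[S,s] \cup [t,T]}, W|_{[s,t] \times \Omega}; \spaceH)$, and, in parallel with the first direction, also the identity~\eqref{geor}. The main obstacle here is to justify $\mathbb{D}^{(1,2)}$-convergence, rather than mere $L^2$-convergence, of the cylindrical approximations in this extended setting.
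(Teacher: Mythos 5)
Your direction (i) $\Rightarrow$ (ii) reproduces the paper's argument: test against $F$ from the smaller class, invoke Lemma~\ref{l:dependence.Malliavin.on.filtration} to match the Malliavin derivatives on $[s,t]$, apply the Skorohod duality and Cauchy--Schwarz, then use Lemma~\ref{l:suff.rich} to identify the integral.

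For (ii) $\Rightarrow$ (i), however, you take a genuinely different route from the paper. The paper does not approximate extended test functionals at all: it simply runs the same duality chain in reverse, again testing only against $F\in\mathbb D^{(1,2)}(\mathbb P,\mathbb F_S,W;\R^d)$. By Lemma~\ref{l:dependence.Malliavin.on.filtration} such $F$ lie also in $\mathbb D^{(1,2)}(\mathbb P,\mathbb F_{[S,s]\cup[t,T]},W|_{[s,t]\times\Omega};\R^d)$ with matching Malliavin derivative on $[s,t]\times\Omega$, whence
\begin{equation}
\E\Big[\big\langle\mathcal D(\mathbb P,\mathbb F_{[S,s]\cup[t,T]},W|_{[s,t]\times\Omega};\R^d)F,\,X|_{[s,t]\times\Omega}\big\rangle_{L^2(\lambda_{[s,t]};\R^{d\times m})}\Big]
=\E\Big[\big\langle F,\,\smallint_S^T\1_{[s,t]}(r)X_r\,\delta W_r^{\mathbb F_S}\big\rangle_{\R^d}\Big],
\end{equation}
and Cauchy--Schwarz gives the required bound without any representation of $F$ as a cylindrical functional. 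This is shorter and entirely symmetric with the first direction. That said, the obstacle you flag is not fictitious: the bound above is proved only for $F\in\mathbb D^{(1,2)}(\mathbb P,\mathbb F_S,W;\R^d)$, while Definition~\ref{d:integral} asks for it over $\operatorname{span}\big(\mathcal S(\mathbb P,\mathbb F_{[S,s]\cup[t,T]},W|_{[s,t]\times\Omega};\R^d)\big)$, a class not contained in the former (indicator functions of nontrivial events in $\mathfrak S(W_r-W_S\colon r\in[S,s])$ belong to the extended simple class but not to $\mathbb D^{(1,2)}(\mathbb P,\mathbb F_S,W;\R^d)$). Bridging this amounts precisely to your stated obstacle, namely showing that $\operatorname{span}(\mathcal S(\mathbb P,\mathbb F_S,W;\R^d))$ is a core (graph-norm dense subset of the domain) for $\mathcal D(\mathbb P,\mathbb F_{[S,s]\cup[t,T]},W|_{[s,t]\times\Omega};\R^d)$. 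The paper absorbs this into the phrase ``the definition of the Skorohod integral implies'' without spelling it out; you at least name the missing lemma, though you do not supply the approximation that proves it, so neither version is complete as written without that core/density step.
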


\begin{proof}[Proof of Proposition~\ref{l:Skorohod.on.T}]
\textbf{`(i) implies (ii)':}
Assume that the process $X|_{[s, t]\times\Omega}$ is 
$( \mathbb P, \mathbb{F}_{[S,s]\cup[t,T]}, W|_{[s, t]\times\Omega}; \spaceH )$-Skorohod-integrable. 
This implies that 
$ \1_{[s, t]} X \in L^2( \mathbb P|_{\mathbb{F}_T}; L^2( \lambda_{[S, T]}; \R^{d\times m}))$.
Lemma~\ref{l:dependence.Malliavin.on.filtration}, the definition of the Skorohod integral,
and the Cauchy-Schwarz inequality imply for all 
$F \in \mathbb D^{(1, 2)}( \mathbb P, \mathbb{F}_S, W; \spaceH )$
that
\begin{equation} \label{locher5}
\begin{split}
&\mathbb E \Big[ \Big \langle \mathcal D( \mathbb P, \mathbb{F}_{S}, W; \spaceH ) F, \1_{[s, t]} X
\Big \rangle_{ L^2( \lambda_{[S, T]}; \R^{d\times m} )} \Big]
\\& =
\mathbb E \Big[ \Big \langle ( \mathcal D( \mathbb P, \mathbb{F}_{S}, W; \spaceH ) F) |_{[s, t]\times\Omega} , 
X |_{[s, t]\times\Omega} \Big \rangle_{L^2( \lambda_{[s, t]}; \R^{d\times m} )}
\Big]
\\&
=
\mathbb E \Big[
\Big \langle \mathcal D( \mathbb P, \mathbb{F}_{[S, s] \cup [t, T]}, W|_{[s, t]\times\Omega}; \spaceH ) F, 
X |_{[s, t]\times\Omega}
\Big \rangle_{L^2( \lambda_{[s, t]}; \R^{d\times m})}
\Big]
\\& =
\mathbb E \Big[ \Big \langle F, 
\smallint_s^t X_r \,\delta W_r^{\mathbb{F}_{[S,s]\cup[t,T]}}
\Big \rangle_{\spaceH} \Big]
\\&
\leq
\Big \|
\smallint_s^t X_r \,\delta W_r^{\mathbb{F}_{[S,s]\cup[t,T]}}
\Big \|_{L^2( \mathbb P; \spaceH)}
\cdot
\| F \|_{L^2( \mathbb P; \spaceH)}
< \infty.
\end{split}
\end{equation}
We conclude that $\1_{[s, t]} X$ is $( \mathbb P, \mathbb{F}_S, W; \spaceH)$-Skorohod-integrable.

\textbf{`(ii) implies (i)':}
Assume that $\1_{[s, t]} X$ is $( \mathbb P, \mathbb{F}_S, W; \spaceH)$-Skorohod-integrable.
This implies that it holds that $X|_{[s, t]\times\Omega} \in L^2( \mathbb P|_{ \mathbb{F}_T}; L^2( \lambda_{[s, t]}; \R^{d\times m}))$.
Lemma~\ref{l:dependence.Malliavin.on.filtration} and the definition of the Skorohod integral yield for all
$F \in \mathbb D^{(1, 2)}( \mathbb P, \mathbb{F}_S, W; \spaceH)$
that
$F \in \mathbb{D}^{(1, 2)}( \mathbb P, \mathbb{F}_{[S, s] \cup [t, T]}, W|_{[s, t]\times\Omega}; \spaceH)$
and that
\begin{equation} \label{locher6}
\begin{split}
&\mathbb E \Big[
\Big \langle 
\big( \mathcal D( \mathbb P, \mathbb{F}_{[S, s] \cup [t, T]}, W|_{[s,t]\times\Omega}; \spaceH) F \big) , 
X \big |_{[s, t]\times\Omega}
\Big \rangle_{L^2( \lambda_{[s, t]}; \R^{d\times m})}
\Big]
\\&=\mathbb E \Big[
\Big \langle 
\big( \mathcal D( \mathbb P, \mathbb{F}_{S}, W; \spaceH) F \big) \big |_{[s, t]\times\Omega}, 
X \big |_{[s, t]\times\Omega}
\Big \rangle_{L^2( \lambda_{[s, t]}; \R^{d\times m})}
\Big]
\\
& =
\mathbb E \big[ \big \langle \mathcal D( \mathbb P, \mathbb{F}_{S}, W; \spaceH ) F, \1_{[s, t]} X 
\big \rangle_{ L^2( \lambda_{[S, T]}; \R^{d\times m})} \big]
\\& 
=
\mathbb E \big[ \big \langle F,
\smallint_S^T \1_{[s, t]} (r) X _r \,\delta W_r^{\mathbb{F}_{S}}
 \big \rangle_{\spaceH} \big]
\\&
\leq
\Big \|
\smallint_S^T \1_{[s, t]} (r) X _r \,\delta W_r^{\mathbb{F}_{S}}
\Big \|_{L^2( \mathbb P; \spaceH)}
\cdot
\| F \|_{L^2( \mathbb P; \spaceH)}
< \infty.
\end{split}
\end{equation}
Lemma~\ref{l:suff.rich} shows that
$ \operatorname{span}( \mathcal S( \mathbb P, \mathbb{F}_S, W; \spaceH))$ 
is dense in 
$L^2( \mathbb P|_{ \mathbb{F}_T}; \spaceH)$.
This, \eqref{locher5},~\eqref{locher6}, and the definition of the Skorohod
integral
imply that
$X|_{[s, t]\times\Omega}$ is 
$( \mathbb P, \mathbb{F}_{[S,s]\cup[t,T]}, W|_{[s, t]\times\Omega}; \spaceH)$-Skorohod-integrable
and
that it holds 
$ \mathbb P$-a.s.\ that 
\begin{equation}
\smallint_s^t X_r \,\delta W_r^{\mathbb{F}_{[S,s]\cup[t,T]}}
= 
\smallint_S^T \1_{[s, t]} (r) X _r \,\delta W_r^{\mathbb{F}_{S}}.
\end{equation}
The proof of Proposition~\ref{l:Skorohod.on.T} is thus completed.
\end{proof}

It is well-known (e.g., Nualart~\cite[Proposition 1.3.11]{Nualart2006})
that the classical Skorohod integral generalizes the It\^o integral restricted to square-integrable integrands which are adapted to the Brownian filtration. 
The following result, Proposition~\ref{l:Skorohod.generalizes.Ito}, generalizes this.
The proof of Lemma~\ref{l:Skorohod.generalizes.Ito} is analogous to the proof of
Nualart~\cite[Proposition 1.3.11]{Nualart2006}
and is therefore omitted.

\begin{prop} \label{l:Skorohod.generalizes.Ito}
Assume Setting~\ref{setting4},
let $s, t \in [S, T]$ satisfy $s<t$, let
$ \tilde{\mathbb{F}} = ( \tilde{\mathbb{F}}_r)_{r \in [s, t]}$
be a filtration with the property that for all $r \in [s, t]$ it holds that 
$
\tilde{\mathbb{F}}_r 
=
\mathfrak{S}(
\mathfrak{S}(W_u - W_s \colon u \in [s, r])
\cup\mathbb{F}_{[S,s]\cup[t,T]}
)
$
and let
$X \in \mathcal{L}^2( \mathbb P; \mathcal{L}^2( \lambda_{[s, t]}; \R^{d\times m}))$
be $ \tilde{\mathbb{F}}$-predictable.
Then $X$ 
is 
$( \mathbb P, \mathbb{F}_{[S,s]\cup[t,T]}, W|_{[s, t]\times\Omega}; \spaceH)$-Skorohod-integrable 
and it holds $ \mathbb P$-a.s.\ that 
\begin{equation} \label{eq:Skorohod.generalizes.Ito}
\smallint_s^t X_r \,\delta W_r^{\mathbb{F}_{[S,s]\cup[t,T]}}
= 
\smallint_s^t X_r \,dW_r.
\end{equation}
\end{prop}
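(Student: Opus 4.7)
The plan is to follow the classical adjoint/duality argument of Nualart: I verify the defining duality for the Skorohod integral on a dense class of elementary $\tilde{\mathbb{F}}$-adapted processes, where both the It\^o integral and the Skorohod integral can be computed by hand, and then pass to general $X$ by a closure argument. The only novelty beyond the classical Brownian setup is the external sigma-algebra $\mathbb{F}_{[S,s]\cup[t,T]}$ in $\tilde{\mathbb{F}}$, which fits exactly into the role of the ``independent information'' $\mathbb{F}_S$ in the extended Skorohod framework of Definition~\ref{d:integral}.

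First, I introduce the class $\mathcal{E}$ of elementary $\tilde{\mathbb{F}}$-adapted processes of the form $u_r = \sum_{j=1}^n F_j \1_{(a_j, b_j]}(r)$, where $s \le a_1 < b_1 \le \cdots \le b_n \le t$ and each $F_j \in \mathcal{S}(\mathbb{P}, \mathbb{F}_{[S,s]\cup[t,T]}, W|_{[s,a_j]\times\Omega}; \R^{d\times m})$ is a smooth cylindrical functional. A standard approximation (left-continuous simple approximations of $X$, combined with Lemma~\ref{l:suff.rich} applied to each pre-$a_j$ Brownian restriction to upgrade the $\tilde{\mathbb{F}}_{a_j}$-measurable coefficients to smooth cylindrical ones) shows that $\mathcal{E}$ is dense in the $\tilde{\mathbb{F}}$-predictable subspace of $\mathcal{L}^2(\mathbb{P}; \mathcal{L}^2(\lambda_{[s,t]}; \R^{d\times m}))$.

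Next, for an atom $u = F\1_{(a,b]} \in \mathcal{E}$ and any test functional $G = g(\smallint_s^t \phi_1\,dW,\ldots,\smallint_s^t \phi_n\,dW)\,h \in \mathcal{S}(\mathbb{P}, \mathbb{F}_{[S,s]\cup[t,T]}, W|_{[s,t]\times\Omega}; \R^d)$, I verify directly the duality identity
\begin{equation}
\mathbb{E}\bigl[\langle G,\, F(W_b - W_a)\rangle_{\R^d}\bigr] = \mathbb{E}\bigl[\langle \mathcal{D} G,\, F\1_{(a,b]}\rangle_{L^2(\lambda_{[s,t]};\R^{d\times m})}\bigr].
\end{equation}
Splitting each Wiener integral as $\smallint_s^t \phi_i\,dW = \smallint_s^a\phi_i\,dW + \smallint_a^b \phi_i\,dW + \smallint_b^t \phi_i\,dW$ and conditioning on $\tilde{\mathbb{F}}_a = \mathfrak{S}(\mathbb{F}_{[S,s]\cup[t,T]} \cup \mathfrak{S}(W_u - W_s \colon u \in [s,a]))$, the increment $W_b - W_a$ together with the middle integrals $\smallint_a^b \phi_i\,dW$ is jointly Gaussian and independent of $F$. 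Applying classical Gaussian integration by parts conditionally then yields both sides equal, with no correction term since $F$ has no Malliavin derivative on $(a,b]$. By linearity and Definition~\ref{d:integral}, this proves every $u \in \mathcal{E}$ is $(\mathbb{P}, \mathbb{F}_{[S,s]\cup[t,T]}, W|_{[s,t]\times\Omega}; \R^d)$-Skorohod integrable with $\delta(u) = \sum_j F_j(W_{b_j}-W_{a_j}) = \smallint_s^t u_r\,dW_r$.

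Finally, given a $\tilde{\mathbb{F}}$-predictable $X \in \mathcal{L}^2(\mathbb{P}; \mathcal{L}^2(\lambda_{[s,t]}; \R^{d\times m}))$, I choose $(X^n)_n \subset \mathcal{E}$ with $X^n \to X$ in $L^2(\lambda_{[s,t]} \otimes \mathbb{P}; \R^{d\times m})$. Conditional It\^o isometry (conditioning on the independent external sigma-algebra $\mathbb{F}_{[S,s]\cup[t,T]}$) gives $\smallint_s^t X^n_r\,dW_r \to \smallint_s^t X_r\,dW_r$ in $L^2(\mathbb{P}; \R^d)$, and by the previous step these It\^o integrals coincide with $\delta(X^n)$. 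Closedness of $\mathcal{D}$ (Lemma~\ref{l:MD.well-defined}) implies closedness of its adjoint $\delta$, and the limit identification yields Skorohod-integrability of $X$ together with~\eqref{eq:Skorohod.generalizes.Ito}. The only real obstacle is Step 2: keeping track of the external sigma-algebra $\mathbb{F}_{[S,s]\cup[t,T]}$ throughout the Malliavin integration by parts. This is handled cleanly by the fact that, per the definition~\eqref{eq:MD}, $\mathcal{D}$ treats $\mathbb{F}_{[S,s]\cup[t,T]}$-measurable factors as passive parameters absorbed into the $C_b^{\infty,\mathfrak{S}(\mathbb{F}_{[S,s]\cup[t,T]}\cup\mathcal{N})}$-part of the representation, so conditioning on $\mathbb{F}_{[S,s]\cup[t,T]}$ reduces the computation pointwise to the classical Nualart argument for the Brownian motion $W|_{[s,t]\times\Omega}$.
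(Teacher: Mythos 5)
The paper omits the proof of Proposition~\ref{l:Skorohod.generalizes.Ito} and refers to Nualart~\cite[Proposition 1.3.11]{Nualart2006}; your proposal reconstructs exactly that argument in the extended framework of Setting~\ref{setting4}, so it is essentially the same proof the authors had in mind. The chain of steps --- elementary $\tilde{\mathbb{F}}$-adapted processes with smooth cylindrical coefficients, Gaussian integration by parts conditionally on the sigma-algebra that freezes everything outside $(a,b]$, and the closure/adjoint argument to pass from $\mathcal{E}$ to general square-integrable predictable integrands --- is the correct adaptation, and your observation that the external sigma-algebra $\mathbb{F}_{[S,s]\cup[t,T]}$ enters $\mathcal{D}$ only as a passive parameter (via the $C_b^{\infty,\mathfrak{S}(\cdot)}$-part of the cylindrical representation) is precisely why the classical Nualart computation goes through unchanged; the last step could equivalently be finished by invoking Lemma~\ref{l:Skorohod.integral.closed} in place of the general ``closed operator has closed adjoint'' principle.
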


The next result, Lemma~\ref{l:Skorohod.integral.closed}, proves that if a sequence of integrals converges weakly and has uniformly bounded Skorohod integrals, then the limit is Skorohod-integrable and the sequence of Skorohod integrals of the sequence converges weakly. 
Lemma~\ref{l:Skorohod.integral.closed}
follows immediately from the definition of the Skorohod integral
and its proof is therefore omitted.
\begin{lemma} \label{l:Skorohod.integral.closed}
Assume Setting~\ref{setting4}, let
$X \in L^2( \mathbb P|_{ \mathbb{F}_T }; L^2( \lambda_{[S, T]}; \R^{d\times m}))$,
and let 
$(X_n)_{n \in \N }\subseteq\operatorname{Dom}_ \delta( \mathbb P, \mathbb{F}_S, W; \spaceH)$ be a sequence which satisfies that
$ \sup_{n \in \N} \| \delta ( \mathbb P, \mathbb{F}_S, W; \spaceH) (X_n) \|_{L^2 ( \mathbb P|_{ \mathbb{F}_T }; \spaceH)} < \infty$
and
which converges to $X$ in the weak topology of 
$L^2( \mathbb P|_{ \mathbb{F}_T }; L^2( \lambda_{[S, T]}; \R^{d\times m}))$.
Then 
$X \in \operatorname{Dom}_ \delta( \mathbb P, \mathbb{F}_S, W; \spaceH)$
and 
$( \delta ( \mathbb P, \mathbb{F}_S, W; \spaceH) (X_n))_{n \in \N}$
converges to 
$ \delta ( \mathbb P, \mathbb{F}_S, W; \spaceH) (X)$
in the weak topology of 
$L^2( \mathbb P|_{ \mathbb{F}_T }; \spaceH)$.
\end{lemma}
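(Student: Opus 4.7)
The plan is to prove Lemma~\ref{l:Skorohod.integral.closed} directly from the defining adjoint property of the Skorohod integral (see~\eqref{eq:characterization.Skorohod}) together with the density statement in Lemma~\ref{l:suff.rich}. First, to establish that $X \in \operatorname{Dom}_\delta(\mathbb{P}, \mathbb{F}_S, W; \spaceH)$, I would set $C = \sup_{n \in \N}\|\delta(\mathbb{P}, \mathbb{F}_S, W; \spaceH)(X_n)\|_{L^2(\mathbb{P}|_{\mathbb{F}_T}; \spaceH)}$, which is finite by hypothesis. For every $F \in \operatorname{span}(\mathcal{S}(\mathbb{P}, \mathbb{F}_S, W; \spaceH))$ the Cauchy-Schwarz inequality and the adjoint identity~\eqref{eq:characterization.Skorohod} yield, for each $n$, that
\begin{equation}
\bigl|\mathbb{E}[\langle \mathcal{D}(\mathbb{P}, \mathbb{F}_S, W; \spaceH) F,\, X_n\rangle_{L^2(\lambda_{[S,T]};\R^{d\times m})}]\bigr|
= \bigl|\mathbb{E}[\langle F,\, \delta(\mathbb{P},\mathbb{F}_S,W;\spaceH)(X_n)\rangle_{\spaceH}]\bigr|
\leq C\,\|F\|_{L^2(\mathbb{P};\spaceH)}.
\end{equation}
The weak convergence $X_n \to X$ in $L^2(\mathbb{P}|_{\mathbb{F}_T}; L^2(\lambda_{[S,T]};\R^{d\times m}))$ ensures that the left-hand side converges to $|\mathbb{E}[\langle \mathcal{D}(\mathbb{P},\mathbb{F}_S,W;\spaceH)F,\, X\rangle]|$, so the inequality persists in the limit with the same constant $C$. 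This is exactly the criterion from Definition~\ref{d:integral}, hence $X \in \operatorname{Dom}_\delta(\mathbb{P},\mathbb{F}_S,W;\spaceH)$.

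Second, for the weak convergence of the Skorohod integrals I would use a standard subsequence argument. Since $L^2(\mathbb{P}|_{\mathbb{F}_T};\spaceH)$ is a separable Hilbert space and $(\delta(X_n))_{n\in\N}$ is bounded there by $C$, every subsequence admits a further subsequence $(\delta(X_{n_k}))_{k\in\N}$ converging weakly to some $Y \in L^2(\mathbb{P}|_{\mathbb{F}_T};\spaceH)$ (e.g., Kato~\cite[Lemma 5.1.4]{Kato1980}). For any $F \in \operatorname{span}(\mathcal{S}(\mathbb{P},\mathbb{F}_S,W;\spaceH))$ the adjoint identity, the weak convergence of $(X_{n_k})$, the identification of the Malliavin derivative just established in the first step, and the adjoint identity applied to the limit give
\begin{equation}
\mathbb{E}[\langle F, Y\rangle_{\spaceH}]
= \lim_{k\to\infty} \mathbb{E}[\langle F, \delta(X_{n_k})\rangle_{\spaceH}]
= \lim_{k\to\infty} \mathbb{E}[\langle \mathcal{D}F, X_{n_k}\rangle_{L^2(\lambda_{[S,T]};\R^{d\times m})}]
= \mathbb{E}[\langle \mathcal{D}F, X\rangle]
= \mathbb{E}[\langle F, \delta(X)\rangle_{\spaceH}].
\end{equation}
By Lemma~\ref{l:suff.rich} the span is dense in $L^2(\mathbb{P}|_{\mathbb{F}_T};\spaceH)$, so $Y = \delta(X)$ in $L^2(\mathbb{P}|_{\mathbb{F}_T};\spaceH)$. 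Hence every weakly convergent subsequence of $(\delta(X_n))_{n\in\N}$ has the same limit $\delta(X)$, which together with the uniform bound forces the whole sequence $(\delta(X_n))_{n\in\N}$ to converge weakly to $\delta(X)$ in $L^2(\mathbb{P}|_{\mathbb{F}_T};\spaceH)$.

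The argument is essentially a closedness/adjoint argument and there is no genuine analytic obstacle; the only point one has to be slightly careful about is invoking the correct density statement (Lemma~\ref{l:suff.rich}) to conclude $Y=\delta(X)$, and then using the standard Urysohn-type subsequence principle to upgrade subsequential weak convergence to convergence of the full sequence.
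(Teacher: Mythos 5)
Your proof is correct and is the argument the paper has in mind (the paper explicitly omits the proof, stating that the lemma ``follows immediately from the definition of the Skorohod integral''): you verify the membership criterion of Definition~\ref{d:integral} by passing to the weak limit in the duality inequality, then identify the weak subsequential limit of $(\delta(X_n))_n$ via the adjoint identity~\eqref{eq:characterization.Skorohod} and the density from Lemma~\ref{l:suff.rich}. One could phrase the second half slightly more directly as an $\varepsilon/3$ approximation argument (test against $F$ in the dense span, then extend to general $G\in L^2(\mathbb P|_{\mathbb F_T};\spaceH)$ using the uniform bound on $\|\delta(X_n)\|$) instead of the subsequence principle, but both routes are standard and equivalent in effort.
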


\section*{Acknowledgements}
This project has been partially supported 
by the Deutsche For\-schungs\-ge\-mein\-schaft (DFG, German Research Foundation) via 
RTG 2131 {\it High-dimensional Phenomena in Probability -- Fluctuations and Discontinuity}.
The third author has been partially supported by the startup fund project of Shenzhen Research Institute of Big Data under grant No.\ T00120220001. 
The third author also gratefully acknowledges the Cluster of Excellence EXC 2044-390685587, Mathematics M\"unster: Dynamics-Geometry-Structure funded by the Deutsche Forschungsgemeinschaft (DFG, German Research Foundation).



\def\cprime{$'$} \def\cprime{$'$}

\end{document}